\def\th@plain{\slshape}
\patchcmd{\th@remark}{\itshape}{\slshape}{}{}\makeatother
\newcounter{bidon}
\newcommand{\rdb}{\refstepcounter{bidon}}
\DeclareMathAlphabet{\mathpzc}{OT1}{pzc}{m}{it}
\patchcmd{\sectionmark}{\MakeUppercase}{}{}{}
\begin{document}
%!TEX encoding =  UTF-8 Unicode
%!TEX root = F-Clspadic.tex 

%-------- newtheorem ----------

%\theoremstyle{theorem}
\newtheorem{theorem}{Théorème}[section]
\newtheorem{theoreme}{Théorème}[subsection]
\newtheorem{thdef}[theorem]{Théorème et définition}
\newtheorem{valsatz}[theorem]{Valuativstellensatz}
\newtheorem{vast}[theorem]{\vst}
\newtheorem{vastf}[theorem]{\vst formel}
\newtheorem{pstf}[theorem]{Positivstellensatz formel}
\newtheorem{pst}[theorem]{Positivstellensatz}
\newtheorem{lemma}[theorem]{Lemme}%[section]
\newtheorem{corollary}[theorem]{Corolaire}%[section]
\newtheorem{conjecture}[theorem]{Conjecture}%[section]
\newtheorem{proposition}[theorem]{Proposition}%[section]
\newtheorem{prpta}[theorem]{Propriétés attendues}%[section]
\newtheorem{propdef}[theorem]{Proposition et définition}
\newtheorem{fact}[theorem]{Fait}%[section]
\newtheorem{calculs}[theorem]{Calculs}
\newtheorem{convention}[theorem]{Convention}
\newtheorem{plcc}[theorem]{Principe local-global concret}
\newtheorem{precision}[theorem]{Précision}

\newtheorem{theoremc}[theorem]{Théorème\etoz}
\newtheorem{lemmac}[theorem]{Lemme\etoz}
\newtheorem{corollaryc}[theorem]{Corolaire\etoz}
\newtheorem{proprietec}[theorem]{Propriété\etoz}
\newtheorem{propositionc}[theorem]{Proposition\etoz}
\newtheorem{factc}[theorem]{Fait\etoz}
\newtheorem{propdefc}[theorem]{Proposition et définition\etoz}

\newtheorem{atheorem}{Théorème}[section]
\newtheorem{alemma}[atheorem]{Lemme}
\newtheorem{acorollary}[atheorem]{Corolaire}
\newtheorem{apropriete}[atheorem]{Propriété}
\newtheorem{aproposition}[atheorem]{Proposition}
\newtheorem{apropdef}[atheorem]{Proposition et définition}
\newtheorem{afact}[atheorem]{Fait}

\theoremstyle{definition}
\newtheorem{definition}[theorem]{Définition}%[section]
\newtheorem{dfni}[theorem]{Définition informelle}%[section]
\newtheorem{definitions}[theorem]{Définitions}%[section]
\newtheorem{example}[theorem]{Exemple}%[section]
\newtheorem{examples}[theorem]{Exemples}%[section]
\newtheorem{notation}[theorem]{Notation}%[section]
\newtheorem{ter}[theorem]{Terminologie}%[section]
\newtheorem{problem}[theorem]{Problème}
\newtheorem{question}[theorem]{Question}
\newtheorem{questions}[theorem]{Questions}
\newtheorem{context}[theorem]{Contexte}
\newtheorem{descri}[theorem]{Description}

\newtheorem{definitionc}[theorem]{Définition\etoz}%[section]
\newtheorem{definota}[theorem]{Définition et notation}
\newtheorem{aquestion}[atheorem]{Question}
\newtheorem{adefinition}[atheorem]{Définition}

\theoremstyle{remark}
\newtheorem{note}[theorem]{Note}%[section]
\newtheorem{notes}[theorem]{Notes}%[section]
\newtheorem{remark}[theorem]{Remarque}%[section]
\newtheorem{remarks}[theorem]{Remarques}%[section]
\newtheorem{comment}[theorem]{Commentaire}
\newtheorem{aremark}[atheorem]{Remarque}
\newtheorem{aremarks}[atheorem]{Remarques}

%!TEX encoding =  UTF-8 Unicode
%!TEX root =  F-Clspadic.tex
\newcommand{\vou}{\MA{\tsbf{ ou }}}
\newcommand{\Vou}{\MA{\tsbf{OU}}}
\newcommand \EXists[1] {\tsbf{Introduire }{#1}\tsbf{ tel que }\,}
\newcommand \vet {\tsbf{,}\;}
\newcommand \Atcl {\mathrm{Atcl}}

%%%%%%%%%%%%%%%%%%%%%%%%%%%%%%%%%%%%%%%%%

%:  lecteur, lectrice, masculin feminin

\newcounter{MF}
\newcommand\stMF{\stepcounter{MF}}

\newcommand{\lec}{\stMF\ifodd\value{MF}lecteur\xspace\else 
lectrice\xspace\fi}

\newcommand{\lecs}{\stMF\ifodd\value{MF}lecteurs\xspace\else 
lectrices\xspace\fi}

\newcommand{\alec}{\stMF\ifodd\value{MF}au lecteur\xspace\else%
à la lectrice\xspace\fi}

\newcommand{\dlec}{\stMF\ifodd\value{MF}du lecteur\xspace\else%
de la lectrice\xspace\fi}

\newcommand{\llec}{\stMF\ifodd\value{MF}le lecteur\xspace\else la lectrice\xspace\fi}

\newcommand{\Llec}{\stMF\ifodd\value{MF}Le lecteur\xspace\else La lectrice\xspace\fi}

% les suivants ne changent pas le genre
\newcommand{\lui}{\ifodd\value{MF}lui\xspace\else
elle\xspace\fi}

\newcommand{\celui}{\ifodd\value{MF}celui\xspace\else
celle\xspace\fi}

\newcommand{\ceux}{\ifodd\value{MF}ceux\xspace\else
celles\xspace\fi}

\newcommand{\er}{\ifodd\value{MF}er\xspace\else
ère\xspace\fi}

\newcommand{\eux}{\ifodd\value{MF}eux\xspace\else
elles\xspace\fi}

\newcommand{\eUx}{\ifodd\value{MF}eux\xspace\else
euse\xspace\fi}

\newcommand{\eUX}{\ifodd\value{MF}eux\xspace\else
euses\xspace\fi}

\newcommand{\leux}{\ifodd\value{MF}leux\xspace\else
leuse\xspace\fi}

\newcommand{\il}{\ifodd\value{MF}il\xspace\else
elle\xspace\fi}

\newcommand{\ien}{\ifodd\value{MF}ien\xspace\else
ienne\xspace\fi}

\newcommand{\e}{\ifodd\value{MF}\xspace \else e\xspace\fi}

\newcommand{\n}{\ifodd\value{MF}n\xspace\else nne\xspace\fi}

%  \la* fait le ou la sans changer le genre,
%  \la fait le ou la en changeant de genre
\makeatletter
\newcommand{\la}{\@ifstar{\ifodd\value{MF}le\else
la\fi}{\stMF\ifodd\value{MF}le\else la\fi}}
\makeatother
%%%%%%%%%%%%%%%%%%%%%%%%%%%%%%%%%%%%%%%%%%%%%%%%%%%%%%%%%%%%%%%%%%%%

\newcommand \Calculs[1]{
\begin{proof}[\textsl{Calculs}]
#1
\end{proof}
%:fin des Calculs
}

\newcommand \Note{\rdb
\noi{\sl Note. }}

\newcommand \rem{\rdb
\noi{\sl Remarque. }}

\newcommand \REM[1]{\rdb
\noi{\sl Remarque#1. }}

\newcommand \rems{\rdb
\noi{\sl Remarques. }}

\newcommand \exl{\rdb
\noi{\bf Exemple. }}

\newcommand \EXL[1]{\rdb
\noi{\bf Exemple: #1. }}

\newcommand \exls{\rdb
\noi{\bf Exemples. }}

\newcommand \thref[1] {théorème~\ref{#1}}
\newcommand \paref[1] {page~\pageref{#1}}
\newcommand \pstfref[1] {Positivstellensatz formel~\ref{#1}}
\newcommand \pstref[1] {Positivstellensatz~\ref{#1}}

\newcommand\oge{\leavevmode\raise.3ex\hbox{$\scriptscriptstyle\langle\!\langle\,$}}
\newcommand\feg{\leavevmode\raise.3ex\hbox{$\scriptscriptstyle\,\rangle\!\rangle$}}

\newcommand\gui[1]{\oge{#1}\feg}

\newcommand \facile{\begin{proof}
La démonstration est laissée \alec.
\end{proof}
}

\def \num {\no} % en french, coller le numéro juste après  en majuscule \No
%\newcommand \num {{n\o}}

%:  Commentaires, remarques, exemples, problemes
\newcommand\comm{\rdb
\noi{\sl Commentaire. }}

\newcommand\COM[1]{\rdb
\noi{\sl Commentaire #1. }}

\newcommand\comms{\rdb
\noi{\sl Commentaires. }}

\newcommand\Subsubsection[1]{
\rdb\addcontentsline{toc}{subsubsection}{#1} \subsubsection*{#1}}

\newcommand\Subsection[1]{
\rdb\addcontentsline{toc}{subsection}{#1} \subsection*{#1}}

\newcommand\Section[1]{
\rdb\addcontentsline{toc}{section}{#1} \section*{#1}}

\newcommand\Intro{
\rdb\addcontentsline{toc}{section}{Introduction} \section*{Introduction}}

\renewcommand\paragraph[1]{
\rdb\addcontentsline{toc}{subsubsection}{#1} 

\medskip \noindent $\bullet$ \textbf{#1}}

\newcommand\Pb{\rdb
\noi{\bf Problème. }}

\newcommand\eoq{\hbox{}\nobreak
\vrule width 1.4mm height 1.4mm depth 0mm}

%:   cad hdr spdg propeq ...
\newcommand \Cad {C'est-à-dire\xspace}
\newcommand \recu {récurrence\xspace}
\newcommand \hdr {hypothèse de \recu}
\newcommand \cad {c'est-à-dire\xspace}
\newcommand \cade {c'est-à-dire encore\xspace}
\newcommand \ssi {si, et seulement si, }
\newcommand \ssiz {si, et seulement si,~}
\newcommand \cnes {condition nécessaire et suffisante\xspace}
\newcommand \spdg {sans perte de généralité\xspace}
\newcommand \Spdg {Sans perte de généralité\xspace}

\newcommand \Propeq {Les propriétés suivantes sont 
équivalentes.}
\newcommand \propeq {les propriétés suivantes sont 
équivalentes.}

%:  Kev Alg etc...
\newcommand \Kev {$\gK$-\evc}
\newcommand \Kbev {$\gKb$-\evc}
\newcommand \Kevs {$\gK$-\evcs}

\newcommand \Lev {$\gL$-\evc}
\newcommand \Levs {$\gL$-\evcs}

\newcommand \Qev {$\QQ$-\evc}
\newcommand \Qevs {$\QQ$-\evcs}

\newcommand \kev {$\gk$-\evc}
\newcommand \kevs {$\gk$-\evcs}

\newcommand \lev {$\gl$-\evc}
\newcommand \levs {$\gl$-\evcs}

\newcommand \Alg {$\gA$-\alg}
\newcommand \Algs {$\gA$-\algs}

\newcommand \Blg {$\gB$-\alg}
\newcommand \Blgs {$\gB$-\algs}

\newcommand \Clg {$\gC$-\alg}
\newcommand \Clgs {$\gC$-\algs}

\newcommand \klg {$\gk$-\alg}
\newcommand \klgs {$\gk$-\algs}

\newcommand \llg {$\gl$-\alg}
\newcommand \llgs {$\gl$-\algs}

\newcommand \Klg {$\gK$-\alg}
\newcommand \Klgs {$\gK$-\algs}

\newcommand \Llg {$\gL$-\alg}
\newcommand \Llgs {$\gL$-\algs}

\newcommand \QQlg {$\QQ$-\alg}
\newcommand \QQlgs {$\QQ$-\algs}

\newcommand \Rlg {$\gR$-\alg}
\newcommand \Rlgs {$\gR$-\algs}

\newcommand \RRlg {$\RR$-\alg}
\newcommand \RRlgs {$\RR$-\algs}

\newcommand \Vlg {$\gV$-\alg}
\newcommand \Vlgs {$\gV$-\algs}

\newcommand \ZZlg {$\ZZ$-\alg}
\newcommand \ZZlgs {$\ZZ$-\algs}

\newcommand \Amo {$\gA$-module\xspace}
\newcommand \Amos {$\gA$-modules\xspace}

\newcommand \Bmo {$\gB$-module\xspace}
\newcommand \Bmos {$\gB$-modules\xspace}

\newcommand \Cmo {$\gC$-module\xspace}
\newcommand \Cmos {$\gC$-modules\xspace}

\newcommand \kmo {$\gk$-module\xspace}
\newcommand \kmos {$\gk$-modules\xspace}

\newcommand \Kmo {$\gK$-module\xspace}
\newcommand \Kmos {$\gK$-modules\xspace}

\newcommand \Lmo {$\gL$-module\xspace}
\newcommand \Lmos {$\gL$-modules\xspace}

\newcommand \Vmo {$\gV$-module\xspace}
\newcommand \Vmos {$\gV$-modules\xspace}

\newcommand \Zmo {$\gZ$-module\xspace}
\newcommand \Zmos {$\gZ$-modules\xspace}

\newcommand \ZZmo {$\ZZ$-module\xspace}
\newcommand \ZZmos {$\ZZ$-modules\xspace}

\newcommand \Ali {application $\gA$-\lin}
\newcommand \Alis {applications $\gA$-\lins}

\newcommand \kli {application $\gk$-\lin}
\newcommand \klis {applications $\gk$-\lins}

\newcommand \Kli {application $\gK$-\lin}
\newcommand \Klis {applications $\gK$-\lins}

\newcommand \Bli {application $\gB$-\lin}
\newcommand \Blis {applications $\gB$-\lins}

\newcommand \Cli {application $\gC$-\lin}
\newcommand \Clis {applications $\gC$-\lins}

\newcommand \Vli {application $\gV$-\lin}
\newcommand \Vlis {applications $\gV$-\lins}

%:  a
\newcommand \ac{algébriquement clos\xspace}  

\newcommand \acl {anneau \icl}
\newcommand \acls {anneaux \icl}

\newcommand \adp {anneau de Pr\"ufer\xspace}
\newcommand \adps {anneaux de Pr\"ufer\xspace}

\newcommand \adpc {\adp \coh}
\newcommand \adpcs {\adps \cohs}

\newcommand \adu {\alg de décomposition universelle\xspace}
\newcommand \adus {\algs de décomposition universelle\xspace}

\newcommand \adv {anneau de valuation\xspace}
\newcommand \advs {anneaux de valuation\xspace}

\newcommand \advd {anneau de valuation discrète\xspace}
\newcommand \advds {anneaux de valuation discrète\xspace}

\newcommand \advl {anneau \dvla} %anneaux à diviseurs
\newcommand \advls {anneaux \dvlas} 

\newcommand \Afr {Anneau \frl}
\newcommand \Afrs {Anneaux \frls}
\newcommand \afr {anneau \frl}
\newcommand \afrs {anneaux \frls}

\newcommand \aFr {\hyperref[theorieAfr]{anneau \frl}}
\newcommand \aFrs {\hyperref[theorieAfr]{anneau \frls}}

\newcommand \afrr {\afr réduit\xspace}
\newcommand \afrrs {\afrs réduits\xspace}
\newcommand \Afrrs {\Afrs réduits\xspace}

\newcommand \afrvr {\afr avec \ravs}
\newcommand \aFrvr {\hyperref[theorieAfrrv]{\afrvr}}
\newcommand \afrvrs {\afrs avec \ravs}

\newcommand \aftr {anneau réticulé \ftm réel\xspace}
\newcommand \aftrs {anneaux réticulés \ftm réels\xspace}

\newcommand \aG {\alg galoisienne\xspace}
\newcommand \aGs {\algs galoisiennes\xspace}

\newcommand \agB {\alg de Boole\xspace}
\newcommand \agBs {\algs de Boole\xspace}

\newcommand \agH {\alg de Heyting\xspace}
\newcommand \agHs {\algs de Heyting\xspace}

\newcommand \agq{algébrique\xspace}
\newcommand \agqs{algébriques\xspace}

\newcommand \agqt{algébriquement\xspace}

\newcommand \aKr {anneau de Krull\xspace}
\newcommand \aKrs {anneaux de Krull\xspace}

\newcommand \ale {\alg étale\xspace}
\newcommand \ales {\algs étales\xspace}

\newcommand \alg {algèbre\xspace}
\newcommand \algs {algèbres\xspace}

\newcommand \algo{algorithme\xspace}
\newcommand \algos{algorithmes\xspace}

\newcommand \algq{algorithmique\xspace}
\newcommand \algqs{algorithmiques\xspace}

\newcommand \alh{\alo \hen}
\newcommand \alhs{\alos \hens}

\newcommand \ali {application \lin}
\newcommand \alis {applications \lins}

\newcommand \algb {anneau \lgb}
\newcommand \algbs {anneaux \lgbs}

\newcommand \alo {anneau local\xspace}
\newcommand \alos {anneaux locaux\xspace}

\newcommand \alrd {\alo \dcd}
\newcommand \alrds {\alos \dcds}

\newcommand \alrdh{\alrd \hen}
\newcommand \alrdhs{\alrds \hens}

\newcommand \anar {anneau \ari}
\newcommand \anars {anneaux \aris}

\newcommand \anor {anneau normal\xspace}
\newcommand \anors {anneaux normaux\xspace}

\newcommand \apf {\alg \pf}
\newcommand \apfs {\algs \pf}

\newcommand \apG {\alg prégaloisienne\xspace}
\newcommand \apGs {\algs prégaloisiennes\xspace}

\newcommand \arch {archimédien\xspace}
\newcommand \arche {archimédienne\xspace}
\newcommand \archs {archimédiens\xspace}
\newcommand \arches {archimédiennes\xspace}

\newcommand \arc {anneau réel clos\xspace}
\newcommand \aRc {\hyperref[theorieArc]{\arc}}
\newcommand \arcs {anneaux réels clos\xspace}

\newcommand \ari{arithmétique\xspace}  
\newcommand \aris{arithmétiques\xspace}  

\newcommand \arv {\adv}
\newcommand \arvs {\advs}

\newcommand \Asr {Anneau \str}
\newcommand \Asrs {Anneaux \strs}
\newcommand \asr {anneau \str}
\newcommand \asrs {anneaux \strs}

\newcommand \asrvr {\asr avec \ravs}
\newcommand \asrvrs {\asrs avec \ravs}

\newcommand \atf {\alg \tf}
\newcommand \atfs {\algs \tf}

\newcommand \atfr {anneau total de fractions\xspace}
\newcommand \atfrs {anneaux totals de fractions\xspace}

\newcommand \auto {automorphisme\xspace}
\newcommand \autos {automorphismes\xspace}

%:  b 

\newcommand \bdg {base de Gr\"obner\xspace}
\newcommand \bdgs {bases de Gr\"obner\xspace}

\newcommand \bdp {base de \dcn partielle\xspace}
\newcommand \bdps {bases de \dcn partielle\xspace}

\newcommand \bdf {base de \fap}

\newcommand \Bif {Borne inférieure\xspace} %
\newcommand \bif {borne inférieure\xspace} %
\newcommand \bifs {bornes inférieures\xspace} %

\newcommand \bsp {borne supérieure\xspace} %
\newcommand \bsps {borne supérieures\xspace} %

%:  c

\newcommand \cac{corps \ac}  

\newcommand \calf{calcul formel\xspace}  

\newcommand \cara{caractéristique\xspace}  
\newcommand \caras{caractéristiques\xspace}  

\newcommand \carn{caractérisation\xspace}  
\newcommand \carns{caractérisations\xspace}  

\newcommand \carar{caractériser\xspace}

\newcommand \carf{de caractère fini\xspace}  

\newcommand \cdac{\cdi \ac}  
\newcommand \cdacs{\cdis \ac}  
\newcommand \cdi{corps discret\xspace}
\newcommand \cdis{corps discrets\xspace}

\newcommand \cdf{corps de fractions\xspace}

\newcommand \cdH{code de Hensel\xspace}
\newcommand \cdHs{codes de Hensel\xspace}

\newcommand \cdr{corps de racines\xspace}
  
\newcommand \cdv{changement de variables\xspace}  
\newcommand \cdvs{changements de variables\xspace}

\newcommand \cla {clôture \agq}
\newcommand \clas {clôtures \agqs}

\newcommand \cli {clôture intégrale\xspace}
\newcommand \clis {clôtures intégrales\xspace}

\newcommand \clr {clôture réelle\xspace}
\newcommand \clrs {clôtures réelles\xspace}

\newcommand \clsep {clôture \spl}
\newcommand \clseps {clôtures \spls}

\newcommand \codi {corps ordonné discret\xspace}
\newcommand \codis {corps ordonnés discrets\xspace}

\newcommand \coe {coefficient\xspace}
\newcommand \coes {coefficients\xspace}

\newcommand \coh {cohérent\xspace}
\newcommand \cohs {cohérents\xspace}

\newcommand \cohc {cohérence\xspace}

\newcommand \colo {couple local\xspace}
\newcommand \colos {couples locaux\xspace}

\newcommand \colH {\colo hensélien\xspace}
\newcommand \colHs {\colos henséliens\xspace}

\newcommand \coli {combinaison \lin}
\newcommand \colis {combinaisons \lins}

\newcommand \com {comaximaux\xspace}
\newcommand \come {comaximales\xspace}

\newcommand \coo {coordonnée\xspace}
\newcommand \coos {coordonnées\xspace}

\newcommand \cop {complémentaire\xspace}
\newcommand \cops {complémentaires\xspace}

\newcommand \corl {corolaire\xspace}
\newcommand \corls {corolaires\xspace}

\newcommand \cost {construction\xspace}
\newcommand \costs {constructions\xspace}

\newcommand \cosv {conservative\xspace}
\newcommand \cosvs {conservatives\xspace}

\newcommand \cOsv {\hyperref[defithconserv]{conservative\xspace}}
\newcommand \cOsvs {\hyperref[defithconserv]{conservatives\xspace}}

\newcommand \covr {corps ordonné avec \ravs}
\newcommand \covrs {corps ordonnés avec \ravs}

\newcommand \cpb {compatible\xspace} 
\newcommand \cpbs {compatibles\xspace} 

\newcommand \cpbt {compatibilité\xspace} 
\newcommand \cpbtz {compatibilité} 

\newcommand \crdl {corps résiduel\xspace}
\newcommand \crdls {corps résiduels\xspace}

\newcommand \crc {corps réel clos\xspace}
\newcommand \crcs {corps réels clos\xspace}

\newcommand \crcd {corps réel clos discret\xspace}
\newcommand \crcds {corps réels clos discrets\xspace}

\newcommand \cval{corps valué\xspace}
\newcommand \cvals{corps valués\xspace}

\newcommand \cvar{corps valorisé\xspace}
\newcommand \cvars{corps valorisés\xspace}

\newcommand \cvd{\cval discret\xspace}
\newcommand \cvds{\cvals discrets\xspace}
\newcommand \Cvd{Corps valué discret\xspace}
\newcommand \Cvds{Corps valués discrets\xspace}

\newcommand \cvdac{\cvd\ac}
\newcommand \cvdacs{\cvds\ac}

\newcommand \cvdh{\cvd hensélien\xspace}
\newcommand \cvdhs{\cvds henséliens\xspace}

\newcommand \cvdsc{\cvd \splc}
\newcommand \cvdscs{\cvds \splc}

\newcommand \cvul{corps \ultm}
\newcommand \cvuls{corps \ultms}

\newcommand \cvud{corps \ultd}
\newcommand \cvuds{corps \ultds}

%:  d 

\newcommand \dcd {\rdt discret\xspace}
\newcommand \dcde {\rdt discrète\xspace}
\newcommand \dcds {\rdt discrets\xspace}

\newcommand \dcn {décomposition\xspace}
\newcommand \dcns {décompositions\xspace}

\newcommand \dcnb {\dcn bornée\xspace}

\newcommand \dcnc {\dcn complète\xspace}

\newcommand \dcnp {\dcn partielle\xspace}

\newcommand \dcp {décomposable\xspace}
\newcommand \dcps {décomposables\xspace}

\newcommand \ddk {dimension de~Krull\xspace}
\newcommand \ddi {de dimension inférieure ou égale à~}

\newcommand \dimm {description immédiate\xspace}
\newcommand \dimms {descriptions immédiates\xspace}

\newcommand \ddp {domaine de Pr\"ufer\xspace}
\newcommand \ddps {domaines de Pr\"ufer\xspace}

\newcommand \Demo{Démonstration\xspace}     
\newcommand \Demos{Démonstrations\xspace}     

\newcommand \demo{démonstration\xspace}     
\newcommand \demos{démonstrations\xspace}     

\newcommand \dems{démonstrations\xspace}

\newcommand \deno{dénominateur\xspace}     
\newcommand \denos{dénominateurs\xspace}   

\newcommand \deter {déterminant\xspace}  
\newcommand \deters {déterminants\xspace}  
  
\newcommand \Dfn{Définition\xspace}  
\newcommand \Dfns{Définitions\xspace}  
\newcommand \dfn{définition\xspace}  
\newcommand \dfns{définitions\xspace}  

\newcommand \dftr {droite réticulée \ftm réelle\xspace}
\newcommand \dftrs {droites réticulées \ftm réelles\xspace}
  
\newcommand \dil{différentiel\xspace}  
\newcommand \dils{différentiels\xspace}  
\newcommand \dile{différentielle\xspace}  
\newcommand \diles{différentielles\xspace}  

\newcommand \dip{diviseur principal\xspace}
\newcommand \dips{diviseurs principaux\xspace}

\newcommand \discri{discriminant\xspace}  
\newcommand \discris{discriminants\xspace}  

\newcommand \divle {dimension divisorielle\xspace}

\newcommand \dit{distributivité\xspace}

\newcommand \dlg{d'élargissement\xspace}  

\newcommand \dok {domaine de Dedekind\xspace}
\newcommand \doks {domaines de Dedekind\xspace}

\newcommand \dvla {à diviseurs\xspace}% {divisoriel\xspace} %anneaux
\newcommand \dvlas {à diviseurs\xspace}% %anneaux

\newcommand \dvld {\dvlt décomposé\xspace} %
\newcommand \dvlds {\dvlt décomposés\xspace} %

\newcommand \dvn {dérivation\xspace}
\newcommand \dvns {dérivations\xspace}

\newcommand \dvlg {divisoriel\xspace} %groupes
\newcommand \dvlgs {divisoriels\xspace} %groupes

\newcommand \dvli {\dvlt inversible\xspace} %ideal
\newcommand \dvlis {\dvlt inversibles\xspace} %ideal

\newcommand \dvlt {divisoriellement\xspace} %

\newcommand \dvz {diviseur de zéro\xspace}
\newcommand \dvzs {diviseurs de zéro\xspace}

\newcommand \dve {divisibilité\xspace}

\newcommand \dvee {à \dve explicite\xspace}

\newcommand \dvr {diviseur\xspace}
\newcommand \dvrs {diviseurs\xspace}

%:  e

\newcommand \Eds {Extension des scalaires\xspace}
\newcommand \edss {extensions des scalaires\xspace}
\newcommand \eds {extension des scalaires\xspace}

\newcommand \eco {\elts \com}

\newcommand \egmt {également\xspace}

\newcommand \egt {égalité\xspace}
\newcommand \egts {égalités\xspace}

\newcommand \eimm {extension immédiate\xspace}
\newcommand \eimms {extensions immédiates\xspace}

\newcommand \eli{élimination\xspace}  

\newcommand \elr{élémentaire\xspace}  
\newcommand \elrs{élémentaires\xspace}  

\newcommand \elrt{élémentairement\xspace}  

\newcommand \elt{élément\xspace}  
\newcommand \elts{éléments\xspace}  

\def \endo {endomorphisme\xspace}
\def \endos {endomorphismes\xspace}

\newcommand \entrel {relation implicative\xspace}
\newcommand \entrels {relations implicatives\xspace}

\newcommand\evc{espace vectoriel\xspace} 
\newcommand\evcs{espaces vectoriels\xspace} 

\newcommand \eqn {équation\xspace}  
\newcommand \eqns {équations\xspace}  

\newcommand \eqv {équivalent\xspace}  
\newcommand \eqve {équivalente\xspace}  
\newcommand \eqvs {équivalents\xspace}  
\newcommand \eqves {équivalentes\xspace}  

\newcommand \eqvc {équivalence\xspace}  
\newcommand \eqvcs {équivalences\xspace}  

\newcommand \esid {essentiellement identique\xspace}  
\newcommand \esids {essentiellement identiques\xspace}  

\newcommand \Esid {\hyperref[defitdyesidentiques]{\esid}}  
\newcommand \Esids {\hyperref[defitdyesidentiques]{\esids}}  

\newcommand \eseq {essentiellement \eqve}  
\newcommand \eseqs {essentiellement \eqves}  

\newcommand \Eseq {\hyperref[defitheseq]{\eseq}}  
\newcommand \Eseqs {\hyperref[defitheseq]{\eseqs}}

%:  f g
\newcommand \fab {\fcn bornée\xspace}
\newcommand \fabs {\fcns bornées\xspace}

\newcommand \fat {\fcn totale\xspace}
\newcommand \fats {\fcn totales\xspace}

\newcommand \fap {\fcn partielle\xspace}
\newcommand \faps {\fcns partielles\xspace}

\newcommand \fip {filtre premier\xspace}
\newcommand \fips {filtres premiers\xspace}

\newcommand \fipma {\fip maximal\xspace}
\newcommand \fipmas {\fips maximaux\xspace}

\newcommand \fcn {factorisation\xspace}
\newcommand \fcns {factorisations\xspace}

\newcommand \fdi {fortement discret\xspace}
\newcommand \fdis {fortement discrets\xspace}

\newcommand \fsa {fermé \sagq}
\newcommand \fsas {fermés \sagqs}

\newcommand \fsagc {fonction \sagc}
\newcommand \fsagcs {fonctions \sagcs}

\newcommand \fmt {formellement\xspace}

\newcommand \fit {fidèlement\xspace}
\newcommand \fpt {\fit plat\xspace}
\newcommand \fpte {\fit plate\xspace}
\newcommand \fpts {\fit plats\xspace}
\newcommand \fptes {\fit plates\xspace}

\newcommand \frl {fortement réticulé\xspace}
\newcommand \frle {fortement réticulée\xspace}
\newcommand \frls {fortement réticulés\xspace}

\newcommand \ftm {fortement\xspace}

\newcommand\gmt{géométrie\xspace}  
\newcommand\gmts{géométries\xspace}  

\newcommand\gaq{\gmt \agq}  

\newcommand\gmq{géométrique\xspace}  
\newcommand\gmqs{géométriques\xspace}  

\newcommand\gmqt{géométriquement\xspace}  

\newcommand\gne{généralisé\xspace}  
\newcommand\gnee{généralisée\xspace}  
\newcommand\gnes{généralisés\xspace}  
\newcommand\gnees{généralisées\xspace}  

\newcommand\gnl{général\xspace}  
\newcommand\gnle{générale\xspace}  
\newcommand\gnls{généraux\xspace}  
\newcommand\gnles{générales\xspace}  

\newcommand\gnlt{généralement\xspace}  

\newcommand\gnn{généralisation\xspace}  
\newcommand\gnns{généralisations\xspace}  

\newcommand\gnq {générique\xspace}  
\newcommand\gnqs {génériques\xspace}  

\newcommand\gnr{généraliser\xspace}  

\newcommand \gns{généralise\xspace}

\newcommand \gnt{généralité\xspace}
\newcommand \gnts{généralités\xspace}

\newcommand \grl{groupe \rtl}
\newcommand \grls{groupes \rtls}

\newcommand \gRl {\hyperref[theorieGrl]{\grl}}
\newcommand \gRls {\hyperref[theorieGrl]{\grls}}

\newcommand\gtr{générateur\xspace}  
\newcommand\gtrs{générateurs\xspace}  

%:  h i

\newcommand \hen {hensélien\xspace}
\newcommand \hens {henséliens\xspace}

\newcommand \homo {homomorphisme\xspace}
\newcommand \homos {homomorphismes\xspace}

\newcommand \hmg {homogène\xspace}
\newcommand \hmgs {homogènes\xspace}

\newcommand \icftr {intervalle compact réticulé \ftm réel\xspace}
\newcommand \icftrs {intervalles compacts réticulés \ftm réels\xspace}

\newcommand \icl {intégralement clos\xspace}
\newcommand \icle {intégralement close\xspace}

\newcommand \icsr {intervalle compact \stm réticulé\xspace}
\newcommand \icsrs {intervalles compacts \stm réticulés\xspace}

\newcommand \icrc {intervalle compact réel clos\xspace}
\newcommand \icrcs {intervalles compact réels clos\xspace}

\newcommand \id {idéal\xspace}
\newcommand \ids {idéaux\xspace}

\newcommand \ida {\idt \agq}
\newcommand \idas {\idts \agqs}

\newcommand \idc  {\idt de Cramer\xspace}
\newcommand \idcs {\idts de Cramer\xspace}

\newcommand \idd {idéal déterminantiel\xspace}
\newcommand \idds {idéaux déterminantiels\xspace}

\newcommand \idema {idéal maximal\xspace}
\newcommand \idemas {idéaux maximaux\xspace}

\newcommand \idep {idéal premier\xspace}
\newcommand \ideps {idéaux premiers\xspace}

\newcommand \idemi {\idep minimal\xspace}
\newcommand \idemis {\ideps minimaux\xspace}

\newcommand \idf {idéal de Fitting\xspace}
\newcommand \idfs {idéaux de Fitting\xspace}

\newcommand \idif {idéal \dvlg fini\xspace}
\newcommand \idifs {idéaux \dvlgs finis\xspace}

\newcommand \idli {idéal \dvli\xspace} %ideal
\newcommand \idlis {idéaux \dvlis\xspace} %ideal

\newcommand \idm {idempotent\xspace}
\newcommand \idms {idempotents\xspace}
\newcommand \idme {idempotente\xspace}
\newcommand \idmes {idempotentes\xspace}

\newcommand \idp {idéal principal\xspace}
\newcommand \idps {idéaux principaux\xspace}

\newcommand \idt {identité\xspace}
\newcommand \idts {identités\xspace}

\newcommand \idtr {indéterminée\xspace}
\newcommand \idtrs {indéterminées\xspace}

\newcommand \ifr {idéal fractionnaire\xspace}
\newcommand \ifrs {idéaux fractionnaires\xspace}

\newcommand \imd {immédiat\xspace}
\newcommand \imde {immédiate\xspace}
\newcommand \imds {immédiats\xspace}
\newcommand \imdes {immédiates\xspace}

\newcommand \imdt {immédiatement\xspace}

\newcommand \indtr {inf-demi-treillis\xspace} 

\newcommand \inteq {intuitivement \eqve}
\newcommand \inteqs {intuitivement \eqves}

\newcommand \Inteq {\hyperref[defextintequiv]{\inteq}}
\newcommand \Inteqs {\hyperref[defextintequiv]{\inteqs}}

\newcommand \ing {inverse \gne}
\newcommand \ings {inverses \gnes}

\newcommand \iMP {inverse de Moore-Penrose\xspace}
\newcommand \iMPs {inverses de Moore-Penrose\xspace}

\newcommand \ipp {\idep potentiel\xspace}
\newcommand \ipps {\ideps potentiels\xspace}

\newcommand \ird {irréductible\xspace}
\newcommand \irds {irréductibles\xspace}

\newcommand \iso {isomorphisme\xspace}
\newcommand \isos {isomorphismes\xspace}

\newcommand \isoc {isomorphe\xspace}
\newcommand \isocs {isomorphes\xspace}

\newcommand \itf {idéal \tf}
\newcommand \itfs {idéaux \tf}

\newcommand \itid {intuitivement identique\xspace}
\newcommand \itids {intuitivement identiques\xspace}

\newcommand \iv {inversible\xspace}
\newcommand \ivs {inversibles\xspace}

\newcommand \ivdg {inverse divisoriel\xspace} %groupes
\newcommand \ivdgs {inverses divisoriels\xspace} %groupes

\newcommand \ivde {inverse divisorielle\xspace} %anneaux, liste 
\newcommand \ivdes {inverses divisorielles\xspace} %anneaux, liste 

\newcommand \ivda {inverse divisoriel\xspace} %anneaux, ideaux
\newcommand \ivdas {inverses divisoriels\xspace} %anneaux

%:  l m

\newcommand \lgb {local-global\xspace}
\newcommand \lgbe {locale-globale\xspace}
\newcommand \lgbs {local-globals\xspace}

\newcommand \LHe {Lemme de Hensel\xspace}
\newcommand \lHe {lemme de Hensel\xspace}
\newcommand \lHes {lemmes de Hensel\xspace}
\newcommand \LHm {\LHe multivarié\xspace}
\newcommand \lHm {\lHe multivarié\xspace}
\newcommand \lHms {\lHes multivariés\xspace}

\newcommand \lin {linéaire\xspace}
\newcommand \lins {linéaires\xspace}

\newcommand \lint {linéairement\xspace}

\newcommand \lmo {\lot monogène\xspace}
\newcommand \lmos {\lot monogènes\xspace}

\newcommand \lnl {\lot \nl}
\newcommand \lnls {\lot \nls}

\newcommand \lot {localement\xspace}

\newcommand \lon {localisation\xspace}
\newcommand \lons {localisations\xspace}

\newcommand \lop {\lot principal\xspace}
\newcommand \lops {\lot principaux\xspace}

\newcommand \lsdz {\lot \sdz}

\newcommand \mdi {module des \diles}

\newcommand \mlm {module \lmo}
\newcommand \mlms {modules \lmos}

\newcommand \mlmo {matrice de localisation
monogène\xspace}
\newcommand \mlmos {matrices de localisation
monogène\xspace}

\newcommand \mlp {matrice de localisation
principale\xspace}
\newcommand \mlps {matrices de localisation
principale\xspace}

\newcommand \mo {mono\"{\i}de\xspace}
\newcommand \mos {mono\"{\i}des\xspace}

\newcommand \moco {\mos \com}

\newcommand \molo {morphisme de \lon\xspace}
\newcommand \molos {morphismes de \lon\xspace}

\newcommand \mom {monôme\xspace}
\newcommand \moms {monômes\xspace}

\newcommand \moquo {morphisme de passage au quotient\xspace}
\newcommand \moquos {morphismes de passage au quotient\xspace}

\newcommand \mpf {module \pf}
\newcommand \mpfs {modules \pf}

\newcommand \mpl {module plat\xspace}
\newcommand \mpls {modules plats\xspace}

\newcommand \mpn {matrice de \pn}
\newcommand \mpns {matrices de \pn}

\newcommand \mprn {matrice de \prn}
\newcommand \mprns {matrices de \prn}

\newcommand \mptf {module \ptf}
\newcommand \mptfs {modules \ptfs}

\newcommand \mrc {module \prc}
\newcommand \mrcs {modules \prcs}

\newcommand \mtf {module \tf}
\newcommand \mtfs {modules \tf}

%:  n 

\newcommand \ncr{nécessaire\xspace}       
\newcommand \ncrs{nécessaires\xspace}       

\newcommand \ncrt{nécessairement\xspace}       

\newcommand \ndz {\reg}
\newcommand \reg {régulier\xspace}
\newcommand \regs {réguliers\xspace}
\newcommand \ndzs {\regs}

\newcommand \nl {simple\xspace}
\newcommand \nls {simples\xspace}

\newcommand \noco {\noe \coh}
\newcommand \nocos {\noes \cohs}

\newcommand \Noe {Noether\xspace}

\newcommand \noe {noethérien\xspace}
\newcommand \noes {noethériens\xspace}
\newcommand \noee {noethérienne\xspace}
\newcommand \noees {noethériennes\xspace}

\newcommand \noet {noethérianité\xspace}

\newcommand \nst {Nullstellensatz\xspace}
\newcommand \nsts {Nullstellens\"atze\xspace}

%:  o
\newcommand \op{opération\xspace}  
\newcommand \ops{opérations\xspace}
\newcommand \opari{\op\ari}  
\newcommand \oparis{\ops\aris}  
\newcommand \oparisv{\ops\arisv}  

\newcommand \oqc {ouvert \qc}
\newcommand \oqcs {ouverts \qcs}

\newcommand \ort{orthogonal\xspace}  
\newcommand \orte{orthogonale\xspace}  
\newcommand \orts{orthogonaux\xspace}  
\newcommand \ortes{orthogonales\xspace}  

%:  p

\newcommand \pa {couple saturé\xspace}
\newcommand \pas {couples saturés\xspace}
 
\newcommand \paral{parallèle\xspace}  
\newcommand \parals{paallèles\xspace}  

\newcommand \paralm{parallèlement\xspace}

\newcommand \pb{problème\xspace}  
\newcommand \pbs{problèmes\xspace}  

\newcommand \peq {purement équationnelle\xspace}
\newcommand \peqs {purement équationnelles\xspace}

\newcommand \pf {de \pn finie\xspace}

\newcommand \pgn {polygone de Newton\xspace}
\newcommand \pgns {polygones de Newton\xspace}

\newcommand \plc {\rdt \zed}
\newcommand \plcs {\rdt \zeds}

\newcommand \Plg {Principe \lgb}
\newcommand \plg {principe \lgb}
\newcommand \plgs {principes \lgbs}

\newcommand \plga {\plg abstrait\xspace}
\newcommand \plgas {\plgs abstraits\xspace}

\newcommand \Plgc {\Plg concret\xspace}
\newcommand \plgc {\plg concret\xspace}
\newcommand \plgcs {\plgs concrets\xspace}

\newcommand \pn {présentation\xspace}
\newcommand \pns {présentations\xspace}

\newcommand \pog {\pol \hmg\xspace}
\newcommand \pogs {\pols \hmgs\xspace}

\newcommand \Pol {Polynôme\xspace}
\newcommand \Pols {Polynômes\xspace}

\newcommand \pol {polynôme\xspace}
\newcommand \pols {polynômes\xspace}

\newcommand \polH {\pol de Hensel\xspace}
\newcommand \polHs {\pols de Hensel\xspace}

\newcommand \poll{polynomial\xspace}  
\newcommand \polls{polynomiaux\xspace}  
\newcommand \polle{polynomiale\xspace}  
\newcommand \polles{polynomiales\xspace}  

\newcommand \pollt{polynomialement\xspace}  

\newcommand \polfon {\pol fondamental\xspace}
\newcommand \polmu {\pol rang\xspace}
\newcommand \polmus {\pols rang\xspace}
\newcommand \polcar {\pol caractéristique\xspace}
\newcommand \polcars {\pols caractéristiques\xspace}
\newcommand \polmin {\pol minimal\xspace}
\newcommand \polmins {\pols minimaux\xspace}

\newcommand \polu {\pol unitaire\xspace}
\newcommand \polus {\pols unitaires\xspace}

\newcommand \prc {\pro de rang constant\xspace}
\newcommand \prcs {\pros de rang constant\xspace}

\newcommand \prcc {principe de \rcc}
\newcommand \prca {principe de \rca}
\newcommand \prce {principe de \rce}

\newcommand \prmt {précisément\xspace}
\newcommand \Prmt {Précisément\xspace}

\newcommand \prn {projection\xspace}
\newcommand \prns {projections\xspace}

\newcommand \pro {projectif\xspace}
\newcommand \pros {projectifs\xspace}

\newcommand \prr {projecteur\xspace}
\newcommand \prrs {projecteurs\xspace}

\newcommand \Prt {Propriété\xspace}
\newcommand \Prts {Propriétés\xspace}
\newcommand \prt {propriété\xspace}
\newcommand \prts {propriétés\xspace}

\newcommand \ptf {\pro \tf}
\newcommand \ptfs {\pros \tf}

\newcommand \qc {quasi-compact\xspace}
\newcommand \qcs {quasi-compacts\xspace}

\newcommand \qi {quasi intègre\xspace}
\newcommand \qis {quasi intègres\xspace}

\newcommand \qnl {quasi-\nl}
\newcommand \qnls {quasi-\nls}

%:  r
\newcommand \ralg {règle \agq}
\newcommand \ralgs {règles \agqs}

\newcommand \raJ {radical de Jacobson\xspace}

\newcommand \rav {racine virtuelle\xspace}
\newcommand \ravs {racines virtuelles\xspace}

\newcommand \rcc {\rcm concret\xspace}
\newcommand \rca {\rcm abstrait\xspace}
\newcommand \rce {\rcc des égalités\xspace}

\newcommand \rcm {recollement\xspace}
\newcommand \rcms {recollements\xspace}

\newcommand \rcv {recouvrement\xspace} 
\newcommand \rcvs {recouvrements\xspace}

\newcommand \rde {relation de dépendance\xspace}
\newcommand \rdes {relations de dépendance\xspace}

\newcommand \rdi {\rde intégrale\xspace}
\newcommand \rdis {\rdes intégrales\xspace}

\newcommand \rdl {\rde \lin}
\newcommand \rdls {\rdes \lins}

\newcommand \rdt {résiduellement\xspace}

\newcommand \rdy {règle dynamique\xspace}
\newcommand \rdys {règles dynamiques\xspace}

\def \red {règle directe\xspace}
\newcommand \reds {règles directes\xspace}

%{\hyperref[defithconserv]{conservative\xspace}}
\newcommand \rex {\hyperref[defexistsimple]{règle existentielle simple\xspace}}
\newcommand \rexs {\hyperref[defexistsimple]{règles existentielles simples\xspace}}

\newcommand \rexri {\hyperref[defitdyexrig]{règle existentielle rigide\xspace}}
\newcommand \rexris {\hyperref[defitdyexrig]{règles existentielles rigides\xspace}}

\newcommand \rsim {règle de simplification\xspace}
\newcommand \rsims {règles de simplification\xspace}

\newcommand \rtl {réticulé\xspace}
\newcommand \rtls {réticulés\xspace}

\newcommand \rmq {\rcm de quotients\xspace} % recollement de quotients
\newcommand \rvq {\rcv par quotients\xspace} % recouvrement par quotients
\newcommand \rmqs {\rcms de quotients\xspace} % 
\newcommand \rvqs {\rcvs par quotients\xspace} % 

\newcommand \rpf {réduite-de-présentation-finie\xspace}
\newcommand \rpfs {réduites-de-présentation-finie\xspace}

%:  s

\newcommand \sad {\salg dynamique\xspace}
\newcommand \sads {\salgs dynamiques\xspace}

\newcommand \sagq {semi\agq}
\newcommand \sagqs {semi\agqs}

\newcommand \sagc {\sagq continue\xspace}
\newcommand \sagcs {\sagqs continues\xspace}

\newcommand \salg {structure \agq}
\newcommand \salgs {structures \agqs}

\newcommand \scentrel {relation semi-implicative\xspace}
\newcommand \scentrels {relations semi-implicatives\xspace}

\newcommand \scf {schéma finitaire\xspace}
\newcommand \scfs {schémas finitaires\xspace}

\newcommand \scl {schéma \elr}
\newcommand \scls {schémas \elrs}

\newcommand \sdo {\sdr \orte}
\newcommand \sdos {\sdrs \ortes}

\newcommand \sdr {somme directe\xspace}
\newcommand \sdrs {sommes directes\xspace}

\newcommand \sdz {sans \dvz}

\newcommand \sfio {système fondamental d'\idms orthogonaux\xspace}
\newcommand \sfios {systèmes fondamentaux d'\idms orthogonaux\xspace}

\newcommand \sgr {\sys \gtr}
\newcommand \sgrs {\syss \gtrs}

\newcommand \slgb {strictement \lgb}
\newcommand \slgbs {strictement \lgbs}

\newcommand \sli {\sys \lin}
\newcommand \slis {\syss \lins}

\newcommand \sml {semi-local\xspace}
\newcommand \smls {semi-locaux\xspace}

\newcommand \smq {symétrique\xspace}
\newcommand \smqs {symétriques\xspace}

\newcommand \spb {séparable\xspace}  % algebres
\newcommand \spbs {séparables\xspace}

\newcommand \spe {spécification\xspace}
\newcommand \spes {spécifications\xspace}

\newcommand \spi {\spe incomplète\xspace}
\newcommand \spis {\spes incomplètes\xspace}

\newcommand \spl {séparable\xspace}  % polynomes
\newcommand \spls {séparables\xspace}

\newcommand \splc {\splt clos\xspace}
\newcommand \splce {\splt close\xspace}
\newcommand \splces {\splt closes\xspace}

\newcommand \splt {séparablement\xspace}  

\newcommand \spo {semipolynôme\xspace}
\newcommand \spos {semipolynômes\xspace}

\newcommand \spt{séparabilité\xspace}

\newcommand \srg {suite régulière\xspace}
\newcommand \srgs {suites régulières\xspace}

\newcommand \ste {strictement étale\xspace}
\newcommand \stes {strictement étales\xspace}

\newcommand \stf {strictement fini\xspace}
\newcommand \stfs {strictement finis\xspace}
\newcommand \stfe {strictement finie\xspace}
\newcommand \stfes {strictement finies\xspace}

\newcommand \stl {stablement libre\xspace}
\newcommand \stls {stablement libres\xspace}

\newcommand \stm {strictement\xspace}

\newcommand \str {\stm réticulé\xspace}
\newcommand \stre {\stm réticulée\xspace}
\newcommand \strs {\stm réticulés\xspace}
\newcommand \stres {\stm réticulées\xspace}

\newcommand \sul {supplémentaire\xspace}
\newcommand \suls {supplémentaires\xspace}

\newcommand \Sut {Support\xspace}
\newcommand \Suts {Supports\xspace}
\newcommand \sut {support\xspace}

\newcommand \syc {\sys de coordonnées\xspace}
\newcommand \sycs {\syss de coordonnées\xspace}

\newcommand \syp {\sys \poll}
\newcommand \Syp {\Sys \poll}
\newcommand \syps {\syss \polls}

\newcommand \sype {\syp étale\xspace}
\newcommand \sypes {\syps étales\xspace}

\newcommand \sys {système\xspace}
\newcommand \Sys {Système\xspace}
\newcommand \syss {systèmes\xspace}

\newcommand \sysN {\sys de Newton\xspace}
\newcommand \SysN {\Sys de Newton\xspace}
\newcommand \sysNs {\syss de Newton\xspace}

\newcommand \sysNe {\sysN étale\xspace}
\newcommand \sysNes {\sysNs étales\xspace}

\newcommand \syzy {syzygie\xspace}
\newcommand \syzys {syzygies\xspace}

%:  t
\newcommand \talg {théorie \agq}
\newcommand \talgs {théories \agqs}

\newcommand \tco {théorie cohérente\xspace}
\newcommand \tcos {théories cohérentes\xspace}

\newcommand \tdij {théorie disjonctive\xspace}
\newcommand \tdijs {théories disjonctives\xspace}

\newcommand \tdy {théorie dynamique\xspace}
\newcommand \tdys {théories dynamiques\xspace}

\newcommand \tel {\hyperref[defexistsimple]{théorie existentielle\xspace}}
\newcommand \tels {\hyperref[defexistsimple]{théories existentielles\xspace}}

\newcommand \telri {\hyperref[defitdyexrig]{théorie existentielle rigide\xspace}}
\newcommand \telris {\hyperref[defitdyexrig]{théories existentielles rigides\xspace}}

\newcommand \tf {de type fini\xspace}

\newcommand \tfo {théorie formelle\xspace}
\newcommand \tfos {théorie formelles\xspace}

\newcommand \tgm {théorie \gmq}
\newcommand \tgms {théories \gmqs}

\newcommand \Tho {Théorème\xspace}
\newcommand \Thos {Théorèmes\xspace}
\newcommand \tho {théorème\xspace}
\newcommand \thos {théorèmes\xspace}

\newcommand \thoc {théorème$\mathbf{^*}$~}

\newcommand \tpe {théorie \peq}
\newcommand \tpes {théories \peqs}

\newcommand \trdi {treillis distributif\xspace}
\newcommand \trdis {treillis distributifs\xspace}

\newcommand \trel {transformation \elr}
\newcommand \trels {transformations \elrs}

%:  u

\newcommand \ultm {ultramétrique\xspace}
\newcommand \ultms {ultramétriques\xspace}

\newcommand \ultd {\ultm discret\xspace}
\newcommand \ultds {\ultms discrets\xspace}

\newcommand \umd {unimodulaire\xspace}
\newcommand \umds {unimodulaires\xspace}

\newcommand \unt {unitaire\xspace}
\newcommand \unts {unitaires\xspace}

\newcommand \uvl {universel\xspace}
\newcommand \uvle {universelle\xspace}
\newcommand \uvls {universels\xspace}
\newcommand \uvles {universelles\xspace}

%:  v

\newcommand \vala {valeur absolue\xspace}
\newcommand \valas {valeurs absolues\xspace}

\newcommand \valn {valuation\xspace}
\newcommand \valns {valuations\xspace}

\newcommand \valu {\vala \ultm}
\newcommand \valus {\valas \ultms}

\newcommand \vfn {vérification\xspace}
\newcommand \vfns {vérifications\xspace}

\newcommand \vmd {vecteur \umd}
\newcommand \vmds {vecteurs \umds}

\newcommand \vst {Valuativstellensatz\xspace}
\newcommand \vsts {Valuativstellensätze\xspace}

\newcommand \vstf {\vst formel\xspace}

%: z

\newcommand \zeH {zéro de Hensel\xspace}
\newcommand \zeHs {zéros de Hensel\xspace}

\newcommand \zed {zéro-dimensionnel\xspace}
\newcommand \zede {zéro-dimensionnelle\xspace}
\newcommand \zeds {zéro-dimensionnels\xspace}
\newcommand \zedes {zéro-dimensionnelles\xspace}

\newcommand \zedr {\zed réduit\xspace}
\newcommand \zedre {\zede réduite\xspace}
\newcommand \zedrs {\zeds réduits\xspace}

\newcommand \zmt {\tho de Zariski-Grothendieck\xspace}

%:  ------- maths constructives

\newcommand \cof {constructif\xspace}
\newcommand \cofs {constructifs\xspace}

\newcommand \cov {constructive\xspace}
\newcommand \covs {constructives\xspace}

\newcommand \coma {\maths\covs}
\newcommand \clama {\maths classiques\xspace}

\renewcommand \cot {constructivement\xspace}

\newcommand \matn {mathématicien\xspace}
\newcommand \matne {mathématicienne\xspace}
\newcommand \matns {mathématiciens\xspace}
\newcommand \matnes {mathématiciennes\xspace}

\newcommand \maths {mathématiques\xspace}
\newcommand \mathe {mathématique\xspace}

\newcommand \prco {démonstration \cov}
\newcommand \prcos {démonstrations \covs}

%!TEX encoding =  UTF-8 Unicode
%!TEX root = F-Clspadic.tex 
\newcommand {\junk}[1]{}

\newcommand\rouge[1]{\textcolor{red}{#1}}
\newcommand\bleu[1]{\textcolor{blue}{#1}}
\newcommand\violet[1]{\textcolor{magenta}{#1}}

%%%%%%%%%% Cadre
\newcommand{\Cadre}[2]{%

\medskip%
\newskip\oldleftskip
\newskip\oldrightskip
\oldleftskip=\leftskip%
\oldrightskip=\rightskip%
\leftskip=-\tabcolsep%
\rightskip=-\tabcolsep%
\begin{center}\fbox{%
\begin{tabular}%
{p{#1\textwidth}}
\setlength{\parindent}{5mm}%
\vspace{-1.5mm}#2\vspace{1mm}%
\end{tabular}}\end{center}\par\medskip%
\leftskip=\oldleftskip%
\rightskip=\oldrightskip%
\setlength{\parindent}{6mm}}

\newcommand\boite[2]{\begin{minipage}[c]{#1cm}
     \centering {#2} \end{minipage}}
\newcommand\Boite[3]{\parbox[t][#1cm][c]{#2cm}{\boite{#2}{#3}}}

%%%%%%%%%% Encadre
\newcommand{\Encadre}[1]{\Cadre{.8}{#1}}

%%%%%%%%%% Cencadre
\newcommand{\Cencadre}[1]{\Encadre{\vspace{-3mm}
\begin{center}
#1 \end{center}\vspace{-8mm}}}

\newcommand{\cen}{\centerline}
%:  Grandcadre
\newcommand \Grandcadre[1]{%
\begin{center}
\begin{tabular}{|c|}
\hline
~\\[-3mm]
#1\\[-3mm]
~\\
\hline
\end{tabular}
\end{center}}

\newcommand\dsp{\displaystyle}
\newcommand\ndsp{\textstyle}

\newcommand{\eop}{\hfill \mbox{$\Box$}}

%------- med skip et autres ---------
\newcommand \noi {\noindent}
\renewcommand \ss {ou la la}
\newcommand \sni {\smallskip \noi}
\newcommand \snii {\noi}
\newcommand \ms {\medskip}
\newcommand \mni {\ms\noi}
\newcommand \bs {\bigskip}
\newcommand \bni {\bs\noi}
\newcommand \hs {\qquad}
\newcommand \alb {\allowbreak}
\newcommand \ce {\centerline}

%:  Exists
%\newcommand \Exists {\rotatebox[origin=c]{180}{\tsbf{E}}\hspace{.1em}}
%\newcommand \Bot {\rotatebox[origin=c]{180}{\tsbf{T}}\hspace{.1em}}
%\newcommand \Top {{\tsbf{T}}\hspace{.1em}}

%:  leqslant
\renewcommand \le{\leqslant}
\renewcommand \leq{\leqslant}
\renewcommand \preceq{\preccurlyeq}
\renewcommand \ge{\geqslant}
\renewcommand \geq{\geqslant}
\renewcommand \succeq{\succcurlyeq}
\newcommand   \nneq {\mathrel{\#}}
\newcommand   \ineq {$_{\,\mathrel{\#}}$}

\newcommand\eti{^\times}
\newcommand \epr{^\perp}
\newcommand \etl{^*}
\newcommand \sta{^\star}
\newcommand \bu {{$\bullet$}}
\newcommand \bl {^\bullet}
\newcommand{\bul}{^{\bullet}}
\newcommand \eci {^\circ}
\newcommand \uci{\mathring}
\newcommand \ep[1]{^{(#1)}}
\newcommand \esh{^\sharp}
\newcommand \efl{^\flat}
\newcommand \eto{$^*$ }
\newcommand \etoz{$^*$}
\newcommand \ist{_\star}

\newcommand \Ast {\gA^{\!\star}}
\newcommand \Bst {\gB^{\star}}
\newcommand \Bo{\BB\mathrm{o}}
\newcommand \Ati {\gA^{\!\times}}
\newcommand \Bti {\gB^{\times}}
\newcommand \Vti {\gV^{\times}}
\newcommand \Atl {\gA^{\!*}}
\newcommand \Btl {\gB^{*}}
\newcommand \Ktl {\gK^{*}}
\newcommand \Vtl {\gV^{*}}
\newcommand{\KAt}{\gK\etl\!\sur{\Ati}}
\newcommand{\AAt}{\Atl\!\sur{\Ati}}

\newcommand \Ared {\gA_{\mathrm{red}}}
\newcommand \Bred {\gB_{\mathrm{red}}}

\newcommand \iBA {_{\gB/\!\gA}}
\newcommand \iWV {_{\gW\!/\gV}}
\newcommand \iCA {_{\gC/\!\gA}}
\newcommand \iBk {_{\gB/\gk}}
\newcommand \iBK {_{\gB/\gK}}
\newcommand \iAk {_{\gA/\gk}}
\newcommand \iAK {_{\gA/\gK}}
\newcommand \iCk {_{\gC/\gk}}

\newcommand \divi {\mid}
\newcommand \nedivi {\not\kern 2.5pt\mid}

\newcommand\equidef{\buildrel{{\rm déf}}\over{\;\Longleftrightarrow\;}}
\newcommand\eqdef{\buildrel{\rm déf}\over {\;=\;}}
\newcommand\eqdefi{\buildrel{\rm déf}\over {\;=\;}}

%------- formules a parametres --------

\newcommand \fraC[2] {{{#1}\over {#2}}}
\newcommand \formule[1]{{\left\{ {\arraycolsep2pt\begin{array}{lll} #1 \end{array}}\right.}}
\newcommand \formul[1]{{\left\{ {\arraycolsep2pt\begin{array}{rcl} #1 \end{array}}\right.}}

\newcommand\mapright[1]{\smash{\mathop{\longrightarrow}\limits^{#1}}}
\newcommand\maprightto[1]{\smash{\mathop{\longmapsto}\limits^{#1}}}
\newcommand\mapdown[1]{\downarrow\rlap{$\vcenter{\hbox{$\scriptstyle
#1$}}$}}
\newcommand{\pref}[1]{\textup{\hbox{\normalfont(\ref{#1})}}}

\newcommand \abs[1] {\left|{#1}\right|}
\newcommand \abS[1] {\big|{#1}\big|}
\newcommand \aqo[2] {#1\sur{\gen{#2}}\!}
\newcommand \aQo[2] {#1/{\gEn{#2}}\!}
\newcommand \Aqo[2] {#1\sur{\big\langle{#2}\big\rangle}\!}
\newcommand \Al[1] {\Vi^{\!#1}}
\newcommand \ci[1] {{{#1}^\circ}}
\newcommand \crac[2] {\cro {\frac{#1}{#2}}}
\newcommand \cro[1] {\left[#1\right]}
\newcommand \eqdf[1] {\buildrel{#1}\over =}
\newcommand \equivdf[1] {\buildrel{#1}\over \longleftrightarrow}
\newcommand \frt[1] {\!\left|_{#1}\right.\!}
\newcommand \impdef[1] {\buildrel{#1}\over \Longrightarrow}
\newcommand \norme[1]{\left\lVert #1 \right\rVert}
\newcommand \Norme[1]{\big\lVert #1 \big\rVert}
\newcommand \tra[1] {{\,^{\rm t}\!#1}}
\newcommand \gen[1] {\left\langle{#1}\right\rangle}
\newcommand \gEn[1] {\langle{#1}\rangle}
\newcommand \geN[1] {\big\langle{#1}\big\rangle}
\newcommand \sing[1] {\left\{{#1}\right\}}
\newcommand \so[1] {\left\{\,{#1}\, \right\}}
\newcommand \soo[1] {\{\,{#1}\,\}}
\newcommand \sO[1]{\big\{{#1}\big\}}
\newcommand \sotq[2]{\so{#1\mathrel{;}#2}}
\newcommand \sootq[2]{\soo{#1\mathrel{;}#2}}
\newcommand \sotQ[2]{\sO{#1\mathrel{;}#2}}
\newcommand \sur[1] {\!\left/#1\right.}
\newcommand \und[1] {\underline{#1}}

\newcommand \Sqr {\mathrm{Sqr}}

\newcommand \idg[1] {|\,#1\,|}
\newcommand \idG[1] {\big|\,#1\,\big|}

\newcommand \norm[1] {\Vert\,#1\,\Vert}

% dimension d'une extension
\newcommand \dex[1] {[\,#1\,]}
\newcommand \deX[1] {\big[\,#1\,\big]}

% liste
\newcommand \lst[1] {[\,#1\,]}
\newcommand \lsT[1] {\big[\,#1\,\big]}

%:   fleches longues
\newcommand{\mt}{\mapsto}

\newcommand{\llongrightarrow}{\relbar\joinrel\mkern-1mu\longrightarrow}
\newcommand{\lllongrightarrow}{\relbar\joinrel\mkern-1mu\llongrightarrow}
\newcommand{\llllongrightarrow}{\relbar\joinrel\mkern-1mu\lllongrightarrow}
\newcommand\simarrow{\vers{_\sim}}
\newcommand\isosim{\buildrel{_\sim}\over \longleftrightarrow }
\newcommand\vers[1]{\buildrel{#1}\over \longrightarrow }
\newcommand\vvers[1]{\buildrel{#1}\over \llongrightarrow }
\newcommand\vvvers[1]{\buildrel{#1}\over \lllongrightarrow }
\newcommand \lora {\longrightarrow}
\newcommand \llra {\llongrightarrow}
\newcommand \lllra {\lllongrightarrow}

%:  leqslant
\renewcommand \leq{\leqslant}
\renewcommand \geq{\geqslant}

\newcommand \som {\sum\nolimits}
\newcommand \Ex {{\exists}}
\newcommand \Tt {{\forall}}
\newcommand \te {\otimes}
\newcommand \vep{{\varepsilon}}

%:  macros \`a parametres

\newcommand\lra[1]{\langle{#1}\rangle}
\newcommand\lrb[1] {\llbracket #1 \rrbracket}
\newcommand\lrbd {\lrb{1..d}}
\newcommand\lrbn {\lrb{1..n}}
\newcommand\lrbl {\lrb{1..\ell}}
\newcommand\lrbm {\lrb{1..m}}
\newcommand\lrbk {\lrb{1..k}}
\newcommand\lrbp {\lrb{1..p}}
\newcommand\lrbq {\lrb{1..q}}
\newcommand\lrbr {\lrb{1..r}}
\newcommand\lrbs {\lrb{1..s}}

%\newcommand \vdg {\mathrel{\vrule height6.94444pt width1pt\vrule height3.97222pt width5.11111pt depth-2.97222pt}}
% en 10pt
%\newcommand \vdg {\mathrel{\vrule height8.33333pt width1pt\vrule height4.66667pt width6.33333pt depth-3.66667pt}} % en 12pt
%\newcommand \vd {\,\,\vdg\,\,}
%\newcommand \vdi {\,\,\MA{\vdg}}

\newcommand \vda {\,\vdash\,}

\newcommand \vdw {\,\vdash_w\,}

\newcommand\Aq[2]{#1_{[#2]}}
\newcommand\Aqj[2]{#1_{\{#2\}}}

\newcommand \dar[1] {\MA{\downarrow \!#1}}
\newcommand \uar[1] {\MA{\uparrow \!#1}}
\newcommand \clps[1] {{\downarrow #1\,\downarrow}}
\newcommand \lrce[1] {\lceil#1\rceil}
\newcommand \Lrce[1] {\left\lceil#1\right\rceil}
\newcommand \lrfl[1] {\lfloor#1\rfloor}

\newcommand\tsbf[1]{\textsf{\textbf{\textup{#1}}}}
\newcommand\lab[1]{\item[\tsbf{#1}]}
\newcommand\Lab[1]{\rdb\item[\tsbf{#1}]\label{Ax#1}}
\newcommand\Tsbf[1]{\hyperref[Ax#1]{\tsbf{#1}}}

%:  mathpzc  \DeclareMathAlphabet{\mathpzc}{OT1}{pzc}{m}{it}
%\newcommand\sa[1]{\hbox{\large\usefont{T1}{pzc}{m}{it}#1}\,}
%\newcommand\sab[1]{\hbox{\large\usefont{T1}{pzc}{m}{it}#1}\,\,}
\newcommand\SA[1]{\rdb\sa{#1}\label{theorie#1}}
\newcommand\Sa[1]{\hyperref[theorie#1]{\sa{#1}}}
\newcommand\sa[1]{\hbox{\usefont{T1}{pzc}{m}{it}#1}\,}
\newcommand\sab[1]{\hbox{\usefont{T1}{pzc}{m}{it}#1}\,\,}
\newcommand\sA[1]{\hbox{\small\usefont{T1}{pzc}{m}{it}#1}\,}
\newcommand\sAb[1]{\hbox{\small\usefont{T1}{pzc}{m}{it}#1}\,\,}

\newcommand\Ae[1]{\gA^{\!#1}}

\newcommand \snic[1] {\sni\centerline{$#1$}

\ss}

\newcommand \snac[1]{\sni
{\small\centering$#1$\par}

\ss}

\newcommand \eoe {\hbox{}\nobreak\hfill
\vrule width .5em height .5em depth 0mm \par \smallskip}

\newcommand \bal[1] {^\rK_{#1}}
\newcommand \ul[1] {_\rK^{#1}}

\newcommand\env[2] {{{#2}_{#1}^{\mathrm{e}}}} % algebre enveloppante
\newcommand\Om[2]  {\Omega_{{#2}/{#1}}}
\newcommand\Der[3] {{\rm Der}_{{#1}}({#2},{#3})}

\newcommand{\DBxk}{{\Der \gk\gB\xi}}%
\newcommand{\DAxk}{{\Der \gk\gA\xi}}%
\newcommand{\DkXxk}{{\Der \gk\kuX\xi}}%

\newcommand \isA[1] {_{#1/\!\gA}}
\newcommand \OmA[1]{\Omega\isA{#1}}

\newcommand \ov[1] {\overline{#1}}

\newcommand \wh[1] {\widehat{#1} }
\newcommand \wi[1] {\widetilde{#1} }

\newcommand\dessus[2]{{\textstyle {#1} \atop \textstyle {#2}}}

\newcommand\carray[2]{{\left[\begin{array}{#1} #2 \end{array}\right]}}
\newcommand\cmatrix[1]{\left[\matrix{#1}\right]}
\newcommand\clmatrix[1]{{\left[\begin{array}{lllllll} #1 \end{array}\right]}}
\newcommand\dmatrix[1]{\abs{\matrix{#1}}}
\newcommand\Cmatrix[2]{\setlength{\arraycolsep}{#1}\left[\matrix{#2}\right]}
\newcommand\Dmatrix[2]{\setlength{\arraycolsep}{#1}\left|\matrix{#2}\right|}

%%%%%%%%%%%%%%%%%%%%%%%%%%%%%%%%%%%%%%%%%%%%%%%%%%%%%%%%%%%%%%%%%%%

\makeatletter
\newif\if@borderstar
\def\bordercmatrix{\@ifnextchar*{%
  \@borderstartrue\@bordercmatrix@i}{\@borderstarfalse\@bordercmatrix@i*}%
}
\def\@bordercmatrix@i*{\@ifnextchar[{%
  \@bordercmatrix@ii}{\@bordercmatrix@ii[()]}
}
\def\@bordercmatrix@ii[#1]#2{%
  \begingroup
    \m@th\@tempdima.875em\setbox\z@\vbox{%
      \def\cr{\crcr\noalign{\kern 2\p@\global\let\cr\endline}}%
      \ialign {$##$\hfil\kern.2em\kern\@tempdima&\thinspace%
      \hfil$##$\hfil&&\quad\hfil$##$\hfil\crcr\omit\strut%
      \hfil\crcr\noalign{\kern-\baselineskip}#2\crcr\omit%
      \strut\cr}}%
    \setbox\tw@\vbox{\unvcopy\z@\global\setbox\@ne\lastbox}%
    \setbox\tw@\hbox{\unhbox\@ne\unskip\global\setbox\@ne\lastbox}%
    \setbox\tw@\hbox{%
      $\kern\wd\@ne\kern-\@tempdima\left\@firstoftwo#1%
        \if@borderstar\kern.2em\else\kern -\wd\@ne\fi%
      \global\setbox\@ne\vbox{\box\@ne\if@borderstar\else\kern.2em\fi}%
      \vcenter{\if@borderstar\else\kern-\ht\@ne\fi%
        \unvbox\z@\kern-\if@borderstar2\fi\baselineskip}%
\if@borderstar\kern-2\@tempdima\kern.4em\else\,\fi\right\@secondoftwo#1 $%
    }\null\;\vbox{\kern\ht\@ne\box\tw@}%
  \endgroup
}
\makeatother

\newcommand \bloc[4]{\left[
\begin{array}{cc}
#1 & #2   \\
#3 & #4
\end{array}
\right]}

\newcommand \cm{em}

\newcommand{\blocs}[8]{%
{\setlength{\unitlength}{.0833\textwidth}
\tabcolsep0pt\renewcommand{\arraystretch}{0}%
\begin{tabular}{|c|c|}
\hline
\parbox[t][#3\cm][c]{#1\cm}{\begin{minipage}[c]{#1\cm}
\centering#5
\end{minipage}}&
\parbox[t][#3\cm][c]{#2\cm}{\begin{minipage}[c]{#2\cm}
\centering#6
\end{minipage}}\\
\hline
\parbox[t][#4\cm][c]{#1\cm}{\begin{minipage}[c]{#1\cm}
\centering#7
\end{minipage}}&
\parbox[t][#4\cm][c]{#2\cm}{\begin{minipage}[c]{#2\cm}
\centering#8
\end{minipage}}\\
\hline
\end{tabular}
}}

\newcommand \UneCol[1]{%
\sni\mbox{\hspace{.02\textwidth}%
\parbox[t]{.98\textwidth}{#1}%
}}

\newcommand \Unecol[1]{%
\sni\mbox{\hspace{.1\textwidth}%
\parbox[t]{.9\textwidth}{#1}%
}}

\newcommand \Deuxcol[4]{%
\sni\mbox{\parbox[t]{#1\textwidth}{#3}%
\hspace{.05\textwidth}%
\parbox[t]{#2\textwidth}{#4}}}

\newcommand \DeuxCol[2]{%
\Deuxcol{.475}{.475}{#1}{#2}}

\newcommand \DeuxCols[2]{%
\sni\mbox{\hspace{.02\textwidth}%
\parbox[t]{.475\textwidth}{#1}%
\hspace{.03\textwidth}%
\parbox[t]{.475\textwidth}{#2}}}

\newcommand \DeuxRegles[2]{%
\vspace{-1em}\DeuxCols
{\begin{enumerate}  #1
\end{enumerate}
}
{\begin{enumerate}  #2
\end{enumerate}
}
\vspace{-.3em}
}

\newcommand \UneRegle[2]{%
\vspace{-1em}\UneCol{
\begin{enumerate}
\lab{#1}{#2}
\end{enumerate}
}
\vspace{-.3em}
}

\newcommand \Regles[1]{%
\vspace{-1em}\UneCol{
\begin{enumerate}
{#1}
\end{enumerate}
}
\vspace{-.3em}
}

\newcommand \regles[1]{%
\vspace{-1em}\Unecol{
\begin{enumerate}
{#1}
\end{enumerate}
}
\vspace{-.3em}
}

\newcommand \itbu {\item[$\bullet$]}
\newcommand \labu {\lab{$\bullet$}}

\newcommand \Deuxbu[2]{%
\vspace{-.6em}\DeuxCols
{\begin{itemize}  #1
\end{itemize}}
{\begin{itemize}  #2
\end{itemize}}}

\newcommand\dcan[9]{
\xymatrix @C=1.2cm{
#1\,\ar[d]_{#4}\ar[r]^{#2}   & \,#3   \\
#6\,\ar[r]_{#7}    & #8\ar[u]_{#5}  %\\
}
}

\newcommand \PNV[9]{
$$\quad
\vcenter{\xymatrix@C=2.5cm @R=1.7cm
{
#1 \ar[d]_{#2} \ar[dr]^{#3} \\
{#4} \ar@{-->}[r]_{{#5}\,!}   & {#6} \\
}}
\quad\quad
\vcenter{
\vspace{2mm}
\hbox{\small {#7}}
\hbox{~\\[2mm] ~}
\hbox{\small {#8}}
\hbox{~\\[2mm] ~}
\hbox{\small {#9}}
\hbox{~\\[2mm] ~ }
}
$$
}

%: revdots
\makeatletter
\def\revddots{\mathinner{\mkern1mu\raise\p@
\vbox{\kern7\p@\hbox{.}}\mkern2mu
\raise4\p@\hbox{.}\mkern2mu\raise7\p@\hbox{.}\mkern1mu}}
\makeatother

%------- lettres formatees --------
%: standards maths
\newcommand \BB{\mathbb {B}}
\newcommand \CC{\mathbb {C}}
\newcommand \FF{\mathbb {F}}
\newcommand \II{\mathbb {I}}
\newcommand \KK{\mathbb {K}}
\newcommand \MM{\mathbb {M}}
\newcommand \NN{\mathbb {N}}
\newcommand \ZZ{\mathbb {Z}}
\newcommand \OO{\mathbb {O}}
\newcommand \PP{\mathbb {P}}
\newcommand \QQ{\mathbb {Q}}
\newcommand \RR{\mathbb {R}}

\newcommand \Fp{\FF_p}
\newcommand \Qp{\QQ_p}
\newcommand \Zp{\ZZ_p}
\newcommand \Cp{\CC_p}
\newcommand \Fpa{\FF_{p,\rm alg}}
\newcommand \Qpa{\QQ_{p,\rm alg}}
\newcommand \Zpa{\ZZ_{p,\rm alg}}
\newcommand \Cpa{\CC_{p,\rm alg}}
\newcommand \QpX{\Qp[X]}
\newcommand \FpX{\Fp[X]}
\newcommand \ZpX{\Zp[X]}
\newcommand \ZpXn{\Zp[\Xn]}
\newcommand \CpX{\Cp[X]}
\newcommand \QpaX{\Qpa[X]}
\newcommand \QpaY{\Qpa[Y]}
\newcommand \Qpax{\Qpa[x]}
\newcommand \Qpay{\Qpa[y]}
\newcommand \ZpaX{\Zpa[X]}
\newcommand \ZpaY{\Zpa[Y]}
\newcommand \Zpax{\Zpa[x]}
\newcommand \Zpay{\Zpa[y]}
\newcommand \ZpaXn{\Zpa[\Xn]}

%: gras math
\newcommand \gk {\mathbf{k}}
\newcommand \gkb {\ov\gk}
\newcommand \gl {\mathbf{l}}
\newcommand \gA {\mathbf{A}}
\newcommand \gB {\mathbf{B}}
\newcommand \gC {\mathbf{C}}
\newcommand \gD {\mathbf{D}}
\newcommand \gd {\mathbf{d}}
\newcommand \gDd {\mathbf{Dd}}
\newcommand \gE {\mathbf{E}}
\newcommand \gF {\mathbf{F}}
\newcommand \gG {\mathbf{G}}
\newcommand \gI {\mathbf{I}}
\newcommand \gIo {\mathbf{Io}}
\newcommand \gK {\mathbf{K}}
\newcommand \gKb {\ov\gK}
\newcommand \gAh {\widehat\gA}
\newcommand \gKh {\widehat\gK}
\newcommand \gKt {\widetilde\gK}
\newcommand \rhe {^{\mathrm{h}}}
\newcommand \rHe {^{\mathrm{H}}}
\newcommand \rNe {^{\mathrm{N}}}
\newcommand \rhs {^{\mathrm{hs}}}
\newcommand \Khe {\gK\rhe}
\newcommand \KHe {\gK\rHe}
\newcommand \KNe {\gK\rNe}
\newcommand \Kxi {\gK[\xi]}
\newcommand \gKp {{\gK'}}
\newcommand \gKw {\widetilde\gK}
\newcommand \gL {\mathbf{L}}
\newcommand \gLb {\ov\gL}
\newcommand \gLh {\widehat\gL}
\newcommand \gLw {\widetilde\gL}
\newcommand \Lhe {\gL\rhe}
\newcommand \gM {\mathbf{M}}
\newcommand \gP {\mathbf{P}}
\newcommand \gR {\mathbf{R}}
\newcommand \gRa {\mathbf{R}_\mathrm{a}}
\newcommand \gS {\mathbf{S}}
\newcommand \gT {\mathbf{T}}
\newcommand \gV {\mathbf{V}}
\newcommand \gVb {\ov{\gV}}
\newcommand \Vxi {\gV[\xi]}
\newcommand \Vhe {\gV\rhe}
\newcommand \VHe {\gV\rHe}
\newcommand \VNe {\gV\rNe}
\newcommand \fmhe {\fm\rhe}
\newcommand \fmHe {\fm\rHe}
\newcommand \mxi {\fm[\xi]}
\newcommand \mhe {\fmhe}
\newcommand \mHe {\fmHe}
\newcommand \fmNe {\fm\rNe}
\newcommand \Ahe {\gA\!\rhe}
\newcommand \fmhs {\fm\rhs}
\newcommand \Ahs {\gA\rhs}

\newcommand \Af {\gA_f}
\newcommand \Am {(\gA,\fm)}

\newcommand \fmA {\fm_\gA}
\newcommand \fmB {\fm_\gB}

\newcommand \KV {(\gK,\gV)}
\newcommand \LW {(\gL,\gW)}

\newcommand \gVh {\widehat\gV}
\newcommand \fmVh {\widehat{\fm_\gV}}
\newcommand \fmAh {\widehat{\fm_\gA}}
\newcommand \gAt {\widetilde\gA}
\newcommand \gVt {\widetilde\gV}
\newcommand \fmti {\widetilde\fm}
\newcommand \gVp {{\gV'}}
\newcommand \gW {\mathbf{W}}
\newcommand \gX {\mathbf{X}}
\newcommand \gZ {\mathbf{Z}}

\newcommand \Gai {{\Gamma_\infty}}
\newcommand \Gat {\wh{\Gamma}}
\newcommand \Ksep {{\gK^\mathrm{sep}}}
\newcommand \ksep {{\gk^\mathrm{sep}}}
\newcommand \Kac {{\gK^\mathrm{ac}}}
\newcommand \Vsep {{\gV^\mathrm{sep}}}
\newcommand \fmsep {{\fm^\mathrm{sep}}}
\newcommand \vsep {{v_\mathrm{sep}}}
\newcommand \Vac {{\gV^\mathrm{ac}}}
\newcommand \fmac {{\fm^\mathrm{ac}}}

%:     SCO  (suites complementaires)
\newdimen\xyrowsp
\xyrowsp=3pt
\newcommand{\SCO}[6]{
\xymatrix @R = \xyrowsp {
                                  &1 \ar@{-}[dl] \ar@{-}[dr] \\
#3 \ar@{-}[ddr]                   &   & #6 \ar@{-}[ddl] \\
                                  &\bullet\ar@{-}[d] \\
                                  &\bullet   \\
#2 \ar@{-}[ddr] \ar@{-}[uur]      &   & #5 \ar@{-}[ddl] \ar@{-}[uul] \\
                                  &\bullet \ar@{-}[d] \\
                                  &\bullet  \\
#1 \ar@{-}[uur]                   &   & #4 \ar@{-}[uul] \\
                                  & 0 \ar@{-}[ul] \ar@{-}[ur] \\
}
}

%:Pnv
\newcommand\Pnv[9]{
$$\quad\quad\quad\quad
\vcenter{\xymatrix@C=1.5cm
{
#1 \ar[d]_{#2} \ar[dr]^{#3} \\
{#4} \ar@{-->}[r]_{{#5}\,!}   & {#6} \\
}}
\quad\quad
\vcenter{
\hbox{\small {#7}}
\hbox{~\\[1mm] ~}
\hbox{\small {#8}}
\hbox{~\\[-1mm] ~}
\hbox{\small {#9}}
\hbox{~\\[0mm] ~ }}
$$
}

%:  mots mathematiques rm
\newcommand \Adj {\MA{\mathrm{Adj}}}
\newcommand \adj {\MA{\mathrm{adj}}}
\newcommand \Adu {\MA{\mathrm{Adu}}}
\newcommand \Ann {\mathrm{Ann}}
\newcommand \Atom {\mathrm{Atom}}
\newcommand \Aut {\MA{\mathrm{Aut}}}
\newcommand \BZ {\MA{\mathrm{BZ}}}
\newcommand \car {\MA{\mathrm{car}}}
\newcommand \Cl {\MA{\mathrm{Cl}}}
\newcommand \ClW {\MA{\mathrm{Cl}_{\mathrm{W}}}}
\newcommand \Coker {\MA{\mathrm{Coker}}}
\newcommand \Cont{\mathrm{Co}}
\newcommand \Dc {\MA{\mathrm{Dc}}}
\newcommand \DDiv {\MA{\mathrm{Dv}}}
\renewcommand \det {\MA{\mathrm{d\acute{e}t}}}
\renewcommand \deg {\MA{\mathrm{deg}}}
\newcommand \Diag {\MA{\mathrm{Diag}}}
\newcommand \disc {\MA{\mathrm{disc}}}
\newcommand \Disc {\MA{\mathrm{Disc}}}
\newcommand \Div {\MA{\mathrm{Div}}}
\newcommand \DivA {\Div\gA }
\newcommand \DivAp {(\Div\gA)^{+} }
\newcommand \DivB {\Div\gB }
\newcommand \DivBp {(\Div\gB)^{+} }
\newcommand \DkM {\MA{\mathrm{DkM}}}
\newcommand \dv {\MA{\mathrm{div}} }
\newcommand \dvA {\dv_\gA }
\newcommand \dvB {\dv_\gB }
\newcommand \ev {{\mathrm{ev}}}
\newcommand \End {\MA{\mathrm{End}}}
\newcommand \fsac {\MA{\mathrm{fsa}}}
\newcommand \Fix {\MA{\mathrm{Fix}}}
\newcommand \Frac {\MA{\mathrm{Frac}}}
\newcommand \Gal {\MA{\mathrm{Gal}}}
\newcommand \Gfr {\MA{\mathrm{Gfr}}}
\newcommand \Gr {\MA{\mathrm{Gr}}}
\newcommand \gr {\MA{\mathrm{gr}}}
\newcommand \Gram {\MA{\mathrm{Gram}}}
\newcommand \gram {\MA{\mathrm{gram}}}
\newcommand \Grl {\MA{\mathrm{Grl}}}
\newcommand \hauteur {\mathrm{hauteur}}
\newcommand \Hom {\MA{\mathrm{Hom}}}
\newcommand \Id {\MA{\mathrm{Id}}}
\newcommand \Iv {\MA{\mathrm{Iv}}}
\newcommand \I {\mathrm{I}}
\newcommand \Idif {\MA{\mathrm{Idif}}}
\newcommand \Idv {\MA{\mathrm{Idv}}}
\newcommand \Ifr {\MA{\mathrm{Ifr}}}
\newcommand \Icl {\MA{\mathrm{Icl}}}
\renewcommand \Im {\MA{\mathrm{Im}}}
\newcommand \Inf {\MA{\mathrm{Inf}}}
\newcommand \Itf {\MA{\mathrm{Itf}}}
\newcommand \Ker {\MA{\mathrm{Ker}}}
\newcommand \Lst {\MA{\mathrm{Lst}}}
\newcommand \LIN {\mathrm{Lin}}
\newcommand \Lsf {\MA{\mathrm{Lsf}}}
\newcommand \Mat {\MA{\mathrm{Mat}}}
\newcommand \Mip {\mathrm{Min}}
\newcommand \md {\mathrm{md}}
\newcommand \Mgcd {\MA{\mathrm{Mgcd}}}
%:\renewcommand \mod
\renewcommand \mod {\;\mathrm{mod}\;}
\newcommand \Mor {\MA{\mathrm{Mor}}}
\newcommand \NDc {\MA{\mathrm{NDc}}}
\newcommand \poids {\mathrm{poids}}
\newcommand \poles {\hbox {\rm p\^oles}}
\newcommand \pgcd {\MA{\mathrm{pgcd}}}
\newcommand \ppcm {\MA{\mathrm{ppcm}}}
\newcommand \Rad {\MA{\mathrm{Rad}}}
\newcommand \Reg {\MA{\mathrm{Reg}}}
\newcommand \rg{\MA{\mathrm{rg}}}
\newcommand \rgst {\mathrm{rgst}}
\newcommand \Res {\mathrm{Res}}
\newcommand \Rs {\MA{\mathrm{Rs}}}
\newcommand \rPr{\MA{\mathrm{Pr}}}
\newcommand \Rv {\mathrm{Rv}}
\newcommand \Sli {\MA{\mathrm{Sli}}}
\newcommand \Som {\MA{\mathrm{Som}}}
\newcommand \Sup {\MA{\mathrm{Sup}}}
\newcommand \Sace {\MA{\mathrm{Sace}}}
\newcommand \Smtf {\MA{\mathrm{Smtf}}}
\newcommand \Stp {\MA{\mathrm{Stp}}}
\newcommand \St {\mathrm{St}}
\newcommand \Tri {\MA{\mathrm{Tri}}}
\newcommand \Tor {\MA{\mathrm{Tor}}}
\newcommand \tr {\MA{\mathrm{tr}}}
\newcommand \Tr {\MA{\mathrm{Tr}}}
\newcommand \Tsc {\MA{\mathrm{Tsch}}}
\newcommand \Um {\MA{\mathrm{Um}}}
\newcommand \val {\MA{\mathrm{val}}}

\newcommand \SIPD {\MA{\mathrm{SIPD}}}
\newcommand \ARC {\MA{\mathrm{ARC}}}
\newcommand \AFR {\MA{\mathrm{AFR}}}
\newcommand \AFRRV {\MA{\mathrm{AFRRV}}}
\newcommand \AFRNZ {\MA{\mathrm{AFRNZ}}}
\newcommand \PPM {\MA{\mathrm{PPM}}}
\newcommand \PB {\MA{\mathrm{PB}}}

\newcommand \Suslin{{\rm Suslin}}
\newcommand \DAbul {\rD_{\!\Abul}}

%:  MA
\newcommand\MA[1]{\mathop{#1}\nolimits}

%:   mots mathematiques  sf
\newcommand \sfa {\mathsf{a}}
\newcommand \sfb {\mathsf{b}}
\newcommand \sfc {\mathsf{c}}
\newcommand \sfd {\mathsf{d}}
\newcommand \sfe {\mathsf{e}}
\newcommand \sff {\mathsf{f}}
\newcommand \sfx {\mathsf{x}}
\newcommand \sfy {\mathsf{y}}
\newcommand \sfz {\mathsf{z}}
\newcommand \sbv {\tsbf{v}}
\newcommand \sbw {\tsbf{w}}
\newcommand \sfv {\mathsf{v}}
\newcommand \sfu {\mathsf{u}}
\newcommand \sft {\mathsf{t}}
\newcommand \sfw {\mathsf{w}}

\newcommand \Cdim {\MA{\mathsf{Cdim}}}
\newcommand \Divdim {\MA{\mathsf{Divdim}}}
\newcommand \Glo {\MA{\mathsf{Glo}}}
\newcommand \GK {\MA{\mathsf{GK}}}
\newcommand \GKO {\MA{\mathsf{GK}_0}}
\newcommand \HO {\MA{\mathsf{H}_0}}
\newcommand \HOp {\MA{\mathsf{H}_0^+}}
\newcommand \Hdim {\MA{\mathsf{Hdim}}}
\newcommand \HeA {{\Heit\gA}}
\newcommand \Heit {\MA{\mathsf{Heit}}}
\newcommand \Hspec {\MA{\mathsf{Hspec}}}
\newcommand \Jdim {\MA{\mathsf{Jdim}}}
\newcommand \jdim {\MA{\mathsf{jdim}}}
\newcommand \Jspec {\MA{\mathsf{Jspec}}}
\newcommand \jspec {\MA{\mathsf{jspec}}}
\newcommand \KO {\MA{\mathsf{K}_0}}
\newcommand \KOp {\MA{\mathsf{K}_0^+}}
\newcommand \KTO {\wi{\mathsf{K}}_0}
\newcommand \Kdim {\MA{\mathsf{Kdim}}}
\newcommand \Lin {\mathsf{L}}
\newcommand \Max {\MA{\mathsf{Max}}}
\newcommand \Min {\MA{\mathsf{Min}}}
\newcommand \OQC {\MA{\mathsf{Oqc}}}
\newcommand \Pic {\MA{\mathsf{Pic}}}
\newcommand \Reel {\MA{\mathsf{Reel}}}
\newcommand \Spec {\MA{\mathsf{Spec}}}
\newcommand \Speclin {\MA{\mathsf{Speclin}}}
\newcommand \Spv {\MA{\mathsf{Spv}}}
\newcommand \Spev {\MA{\mathsf{Spev}}}
\newcommand \Sper {\MA{\mathsf{Sper}}}
\newcommand \Val {\MA{\mathsf{Val}}}
\newcommand \Valp {\MA{\mathsf{Val'}}}
\newcommand \SpecA {\Spec\gA}
\newcommand \SpevA {\Spev\gA}
\newcommand \SpvA {\Spv\gA}
\newcommand \SperA {\Sper\gA}
\newcommand \SpecT {\Spec\gT}
\newcommand \Zar {\MA{\mathsf{Zar}}}
\newcommand \ZF {\MA{\mathsf{ZF}}}
\newcommand \ValA {{\Val\gA}}
\newcommand \ValpA {{\Valp\gA}}
\newcommand \ZarA {{\Zar\gA}}

%: calligraphique
\newcommand \cA {{\cal A}}
\newcommand \cB {{\cal B}}
\newcommand \cC {{\cal C}}
\newcommand \cD {{\cal D}}
\newcommand \cI {{\cal I}}
\newcommand \cJ {{\cal J}}
\newcommand \cF {{\cal F}}
\newcommand \cH {{\cal H}}
\newcommand \cK {{\cal K}}
\newcommand \cL {{\cal L}}
\newcommand \cM {{\cal M}}
\newcommand \cN {{\cal N}}
\newcommand \cP {{\cal P}}
\newcommand \cQ {{\cal Q}}
\newcommand \cR {{\cal R}}
\newcommand \cS {{\cal S}}
\newcommand \cT {{\cal T}}
\newcommand \cV {{\cal V}}

\newcommand \ccd{\mathcal{CD}}
\newcommand \cco{\mathcal{CO}}

\newcommand \Cin{C^{\infty}}

\newcommand \SK {\cS^\rK}
\newcommand \IK {\cI^\rK}
\newcommand \JK {\cJ^\rK}
\newcommand \IH {\cI\rHe}
\newcommand \JH {\cJ\rHe}

%: roman
\newcommand{\Dpp}[2]{{{\partial #1}\over{\partial #2}}}

\newcommand \J {\Jac}
\newcommand \JJ {\JAC}
\newcommand \JAC {\mathrm{JAC}}
\newcommand \Jac {\mathrm{Jac}}
\newcommand \rja {\mathrm{Ja}}
\newcommand \rc {\mathrm{c}}
\newcommand \Df {\MA{\mathrm{Df}}}
\newcommand \Dfr {\MA{\mathrm{Df}}^\mathrm{R}}
\newcommand \Dfmc {\MA{\mathrm{Dfmc}}}
\newcommand \rd {\mathrm{d}}
\newcommand \rv {\mathrm{v}}
\newcommand \rI {\mathrm{I}}
\newcommand \In {{\rI_n}}

\newcommand \Ic {\mathrm{Ic}}
\newcommand \rC {\mathrm{C}}
\newcommand \rD {\mathrm{D}}
\newcommand \rF {\mathrm{F}}
\newcommand \rG {\mathrm{G}}
\newcommand \rH {\mathrm{H}}
\newcommand \rJ {\mathrm{J}}
\newcommand \Li {\MA{\mathrm{Li}}}
\newcommand \rK {\mathrm{K}}
\newcommand \rL {\mathrm{L}}
\newcommand \Mc {\mathrm{Mc}}
\newcommand \rN {\mathrm{N}}
\newcommand \rP {\mathrm{P}}
\newcommand \rR {\mathrm{R}}
\newcommand \rmSa {\MA{\mathrm{Sa}}}
\newcommand \rmSamc {\MA{\mathrm{Samc}}}
\newcommand \rS {\mathrm{S}}
\newcommand \rU {\mathrm{U}}
\newcommand \rV {\mathrm{V}}
\newcommand \DA {\rD_{\!\gA}}
\newcommand \JA {\rJ_\gA}
\newcommand \JT {\rJ_\gT}

%: gothique
\newcommand\fa{\mathfrak{a}}
\newcommand\fb{\mathfrak{b}}
\newcommand\fc{\mathfrak{c}}
\newcommand\fA{\mathfrak{A}}
\newcommand\fB{\mathfrak{B}}
\newcommand\fD{\mathfrak{D}}
\newcommand\fI{\mathfrak{i}}
\newcommand\fII{\mathfrak{I}}
\newcommand\fj{\mathfrak{j}}
\newcommand\fJ{\mathfrak{J}}
\newcommand\fF{\mathfrak{F}}
\newcommand\ff{\mathfrak{f}}
\newcommand\ffg{\mathfrak{g}}
\newcommand\fG{\mathfrak{G}}
\newcommand\fh{\mathfrak{h}}
\newcommand\fl{\mathfrak{l}}
\newcommand\fm{\mathfrak{m}}
\newcommand\mV{{\fm_\gV}}
\newcommand\mW{{\fm_\gW}}
\newcommand\fM{\mathfrak{M}}
\newcommand\fN{\mathfrak{N}}
\newcommand\fp{\mathfrak{p}}
\newcommand\fP{\mathfrak{P}}
\newcommand\fq{\mathfrak{q}}
\newcommand\fU{\mathfrak{U}}
\newcommand\fV{\mathfrak{V}}
\newcommand\fx{\mathfrak{x}}
\newcommand\fy{\mathfrak{y}}

%:  mathscr  \usepackage{mathrsfs}
\newcommand \scC{\mathscr{C}}
\newcommand \scR{\mathscr{R}}

%:  gras math avec \bm
\newcommand{\bma}{\bm{a}}
\newcommand{\bmb}{\bm{b}}
\newcommand{\bmc}{\bm{c}}
\newcommand{\bmd}{\bm{d}}
\newcommand{\bme}{\bm{e}}
\newcommand{\bmf}{\bm{f}}
\newcommand{\bmu}{\bm{u}}
\newcommand{\bmv}{\bm{v}}
\newcommand{\bmw}{\bm{w}}
\newcommand{\bmy}{\bm{y}}
\newcommand{\bmx}{\bm{x}}
\newcommand{\bmz}{\bm{z}}

\newcommand \LLPO {\tsbf{LLPO}}
\newcommand \LPO  {\tsbf{LPO}}

% anneaux, ideaux, ...
\newcommand \Zg {{\Z[G]}}

%: treillis distributifs
\newcommand \vu {\vee} % sup
\newcommand \vi {\wedge} % inf
\newcommand \Vu {\bigvee}
\newcommand \Vi {\bigwedge}
\newcommand \im {\rightarrow} % fleche des treillis implicatifs
\newcommand \da {\,\downarrow\!}

\newcommand \vdu[1] {\vdash^{#1}}
\newcommand \vdb[1] {\vdash_{#1}}

\newcommand \Vrai {\mathsf{Vrai}}
\newcommand \Faux {\mathsf{Faux}}
\newcommand \Un {\mathbf{1}}
\newcommand \Deux {\mathbf{2}}
\newcommand \Trois {\mathbf{3}}
\newcommand \Quatre {\mathbf{4}}
\newcommand \Cinq {\mathbf{5}}

%:  soulign tout pret
\newcommand \una {{\underline{a}}}
\newcommand \ual {{\underline{\alpha}}}
\newcommand \ua  {{\underline{a}}}
\newcommand \ub  {{\underline{b}}}
\newcommand \ube {{\underline{\beta}}}
\newcommand \uc  {{\underline{c}}}
\newcommand \ud  {{\underline{d}}}
\newcommand \udel{{\underline{\delta}}}
\newcommand \ue  {{\underline{e}}}
\newcommand \uf  {{\underline{f}}}
\newcommand \uF  {{\underline{F}}}
\newcommand \ug  {{\underline{g}}}
\newcommand \uh  {{\underline{h}}}
\newcommand \uga {{\underline{\gamma}}}
\newcommand \uP  {{\underline{P}}}
\newcommand \ur {{\underline{r}}}
\newcommand \ut {{\underline{t}}}
\newcommand \uu {{\underline{u}}}
\newcommand \ux {{\underline{x}}}
\newcommand \uxi {{\underline{\xi}}}
\newcommand \uX {{\underline{X}}}
\newcommand \uy {{\underline{y}}}
\newcommand \uY  {{\underline{Y}}}
\newcommand \uz{{\underline{z}}}
\newcommand \uZ{{\underline{Z}}}
\newcommand \uze {{\underline{0}}}
\newcommand \uvep {{\underline{\vep}}}

\newcommand \ak {a_1,\ldots,a_k}
\newcommand \am {a_1,\ldots,a_m}
\newcommand \an {a_1,\ldots,a_n}
\newcommand \aq {a_1,\ldots,a_q}
\newcommand \aln {\alpha_1,\ldots,\alpha_n}
\newcommand \bn {b_1,\ldots,b_n}
\newcommand \bzn {b_0,\ldots,b_n}
\newcommand \bbm {b_1,\ldots,b_m}
\newcommand \ck {c_1,\ldots,c_k}
\newcommand \rcr {c_1,\ldots,c_r}
\newcommand \cq {c_1,\ldots,c_q}
\newcommand \gan {\gamma_1,\ldots,\gamma_n}
\newcommand \un {u_1,\ldots,u_n}
\newcommand \xk {x_1,\ldots,x_k}
\newcommand \Xk {X_1,\ldots,X_k}
\newcommand \xm {x_1,\ldots,x_m}
\newcommand \Xm {X_1,\ldots,X_m}
\newcommand \fn {f_1,\ldots,f_n}
\newcommand \lfm {f_1,\ldots,f_m}
\newcommand \Fn {F_1,\ldots,F_n}
\newcommand \lFm {F_1,\ldots,F_m}
\newcommand \Qpac {\wi\QQ_{p,\mathrm{alg}}}
\newcommand \Qpal {\QQ_{p,\mathrm{alg}}}
\newcommand \sn {s_1,\ldots,s_n}
\newcommand \xn {x_1,\ldots,x_n}
\newcommand \xzn {x_0,\ldots,x_n}
\newcommand \xhn {x_0:\ldots:x_n}
\newcommand \Xn {X_1,\ldots,X_n}
\newcommand \xr {x_1,\ldots,x_r}
\newcommand \Xr {X_1,\ldots,X_r}
\newcommand \xin {\xi_1,\ldots,\xi_n}
\newcommand \xizn {\xi_0,\ldots,\xi_n}
\newcommand \xihn {\xi_0:\ldots:\xi_n}
\newcommand \ym {y_1,\ldots,y_m}
\newcommand \yr {y_1,\ldots,y_r}
\newcommand \Yr {Y_1,\ldots,Y_r}
\newcommand \Yn {Y_1,\ldots,Y_n}
\newcommand \Ym {Y_1,\ldots,Y_m}
\newcommand \yn {y_1,\ldots,y_n}

\newcommand \AT {\gA[T]}
\newcommand \AX {\gA[X]}
\newcommand \Ax {\gA[x]}
\newcommand \AuX {\gA[\uX]}
\newcommand \Aux {\gA[\ux]}
\newcommand \ArX {\gA\lra X}
\newcommand \Axn {\gA[\xn]}
\newcommand \AXn {\gA[\Xn]}

\newcommand \AXm {\gA[\Xm]}
\newcommand \KKXm {{\KK[\Xm]}}
\newcommand \KKuX {{\KK[\uX]}}

\newcommand \AY {\gA[Y]}
\newcommand \Ayn {\gA[\yn]}

\newcommand \BuX {\gB[\uX]}
\newcommand \BuY {\gB[\uY]}
\newcommand \BX {{\gB[X]}}
\newcommand \BT {{\gB[T]}}
\newcommand \BY {{\gB[Y]}}
\newcommand \Bxn {\gB[\xn]}
\newcommand \BXn {{\gB[\Xn]}}
\newcommand \BYm {\gB[\Ym]}

\newcommand \CuX {\gC[\uX]}
\newcommand \CT  {\gC[T]} 
\newcommand \CX  {\gC[X]} 
\newcommand \CXn {\gC[\Xn]} 

\newcommand \kX {{\gk[X]}}
\newcommand \KX {\gK[X]}
\newcommand \Kx {\gK[x]}
\newcommand \VX {\gV[X]}
\newcommand \Vx {\gV[x]}
\newcommand \VT {\gV[T]}
\newcommand \KT {\gK[T]}
\newcommand \KuX {\gK[\uX]}
\newcommand \kuX {\gk[\uX]}
\newcommand \VuX {\gV[\uX]}
\newcommand \Vuxi {\gV[\uxi]}
\newcommand \Vux {\gV[\ux]}
\newcommand \Kux {\gK[\ux]}
\newcommand \kux {\gk[\ux]}
\newcommand \KXk {\gK[\Xk]}
\newcommand \KXm {\gK[\Xm]}
\newcommand \KXn {\gK[\Xn]}
\newcommand \kxm {\gk[\xm]}
\newcommand \kxn {\gk[\xn]}
\newcommand \Ky {\gK[y]}
\newcommand \Vy {\gV[y]}
\newcommand \Kal {\gK[\alpha]}
\newcommand \Wal {\gV[\alpha]}
\newcommand \kY {\gk[Y]}
\newcommand \KY {\gK[Y]}
\newcommand \Kz {\gK[z]}
\newcommand \KZ {\gK[Z]}
\newcommand \kZ {\gk[Z]}
\newcommand \KYn {\gK[\Yn]}
\newcommand \KYm {\gK[\Ym]}
\newcommand \kXn {\gk[\Xn]}
\newcommand \kXr {\gk[\Xr]}
\newcommand \KXr {\gK[\Xr]}
\newcommand \Kxr {\gK[\xr]}

\newcommand \Vxn {\gV[\xn]}
\newcommand \VXn {\gV[\Xn]}

\newcommand \KuY {\gK[\uY]}
\newcommand \Kuy {\gK[\uy]}
\newcommand \Kyn {\gK[\yn]}
\newcommand \Kyr {\gK[\yr]}
\newcommand \kYr {\gk[\Yr]}
\newcommand \KYr {\gK[\Yr]}
\newcommand \Kxn {\gK[\xn]}

\newcommand \LuX {\gL[\uX]}
\newcommand \lXn {\gl[\Xn]}
\newcommand \lxn {\gl[\xn]}
\newcommand \LXn {\gL[\Xn]}
\newcommand \lXr {\gl[\Xr]}
\newcommand \LXr {\gL[\Xr]}
\newcommand \lYr {\gl[\Yr]}
\newcommand \LYr {\gL[\Yr]}

\newcommand \QQXn {\QQ[\Xn]}

\newcommand \Rx {\gR[x]}
\newcommand \RX {\gR[X]}
\newcommand \Rux {\gR[\ux]}
\newcommand \RuX {\gR[\uX]}
\newcommand \RXk {{\gR[\Xk]}}
\newcommand \RXm {{\gR[\Xm]}}
\newcommand \Rxm {{\gR[\xm]}}
\newcommand \RXn {{\gR[\Xn]}}
\newcommand \Rxn {{\gR[\xn]}}
\newcommand \RXzn {{\gR[\Xzn]}}
\newcommand \Rxzn {{\gR[\xzn]}}
\newcommand \RXr {\gR[\Xr]}

\newcommand \Ruy {\gR[\uy]}
\newcommand \Ryn {{\gR[\yn]}}
\newcommand \RRX {\RR[X]}
\newcommand \RRXn {\RR[\Xn]}
\newcommand \RRxn {\RR[\xn]}
\newcommand \RYr {\gR[\Yr]}
\newcommand \RRuX {\RR[\uX]}
\newcommand \RRux {\RR[\ux]}

\newcommand \ZZXn {\ZZ[\Xn]}
\newcommand \ZG {\ZZ[G]}
\newcommand \ZZX {\ZZ[X]}
\newcommand \QQX {\QQ[X]}
\newcommand \CCX {\CC[X]}
\newcommand \ZZx {\ZZ[x]}
\newcommand \QQx {\QQ[x]}
\newcommand \ZZxi {\ZZ[\xi]}
\newcommand \QQxi {\QQ[\xi]}

\newcommand \RRXk {{\RR[\Xk]}}
\newcommand \RRxk {{\RR[\xk]}}
\newcommand \RRXm {{\RR[\Xm]}}
\newcommand \RRxm {{\RR[\xm]}}

\newcommand \lfs {f_1,\ldots,f_s}
\newcommand \lfn {f_1,\ldots,f_n}

%:  Foncteurs grasmanniennes, GLn,SLn ...
\newcommand \Gn  {\gG_n}
\newcommand \Gnk {\gG^n_{k}}
\newcommand \Gnr {\gG^n_{r}}
\newcommand \cGn {\cG_n}
\newcommand \cGnk{\cG_{n,k}}
\newcommand \GGn {\GG^n}
\newcommand \GGnk{\GGn_{k}} % Grasmannienne ``projective'' usuelle
\newcommand \GGnr{\GGn_{r}}
\newcommand \GA  {\mathbb{GA}}
\newcommand \GAn {\GA^n}  % Grasmannienne ``affine'': matrices de proj
\newcommand \GAq {\GA^q}
\newcommand \GAnk{\GAn_{k}}
\newcommand \GAnr{\GAn_{r}}
\newcommand \GL {\mathbb{GL}}
\newcommand \GLn {{\GL_n}}
\newcommand \SL {\mathbb{SL}}
\newcommand \SLn {{\SL_n}}
\newcommand \EE {\mathbb{E}}
\newcommand \En {\EE_n}
\newcommand \Pn {\PP^n}
\newcommand \An {\AA^n}
\newcommand \Sl {\mathbf{SL}}
\newcommand \Sln {{\Sl_n}}

\newcommand \Mm {\MM_{m}}
\newcommand \Mn {\MM_{n}}
\newcommand \Mk {\MM_{k}}
\newcommand \Mq {\MM_{q}}
\newcommand \Mr {\MM_{r}}
\newcommand \MMn {\MM_{n}}

\newcommand \Pf {{{\cal P}_{\mathrm{f}}}}
\newcommand \Pfe {{\rm P}_{{\rm fe}}}

%: hsu
\newcommand\hsz{\\ }
\newcommand\hsu{\\ \hspace*{4mm}}
\newcommand\hsd{\\ \hspace*{8mm}}
\newcommand\hst{\\ \hspace*{1,2cm}}
\newcommand\hsq{\\ \hspace*{1,6cm}}
\newcommand\hsc{\\ \hspace*{2cm}}
\newcommand\hsix{\\ \hspace*{2,4cm}}
\newcommand\hsept{\\ \hspace*{2,8cm}}

\newcommand \DK{\mathscr{D}_{\rm K}} 
\newcommand \DN{\mathscr{D}_{\rm N}}
\newcommand \DD{\mathscr{D}_{\rm D}}
\newcommand \vepstar{\varepsilon^\star}
\newcommand \TrBA{\Tr\nolimits_{\gB/\gA}}
\newcommand \trace{\text{\rm trace}}
\renewcommand \norm{\text{\rm N}}
\renewcommand \disc{\text{\rm D}}
\newcommand \normBA{\norm_{\gB/\gA}}
\newcommand \DiscBA{\Disc(\gB/\gA)}
\newcommand \OmegaBA{\Omega_{\gB/\gA}}
\newcommand \swap{\mathrm {swap}}
\renewcommand \Lin{\mathrm {L}}
\newcommand \BoB{\gB\otimes_\gA\gB}
\newcommand \cercle[1]{\tikz[baseline=(char.base)]{
            \node[shape=circle,draw,inner sep = 0.4pt] (char){#1};}}
\newcommand \Bez {\mathscr{B}} %% bezoutien bi-homogène

\title{Identités algébriques permettant  de démontrer  qu'une algèbre libre finie nette est traciquement étale}
\author{Claude Quitté, Henri Lombardi}
\maketitle

L'article suivant est essentiellement dû à Claude Quitté, le deuxième auteur ne s'est occupé que de la mise en forme du texte.

\tableofcontents

\section {Introduction}

L'objectif central de cet article est de donner une démonstration \elr du \thref{NetteImpliesTracicEtale} suivant, dont nous ne connaissons pas de trace dans la littérature existante. Comme  indiqué dans le titre, notre démonstration est basée sur des \idas. Cela confirme l'adage implicite selon lequel une grande partie de l'\alg commutative la plus abstraite se concentre dans des \idas concernant les matrices de \pols sur un anneau commutatif arbitraire.

\begin {theorem} [une algèbre libre de rang fini qui est nette est traciquement étale]
\label{NetteImpliesTracicEtale}

Soit $\gB/\gA$ une algèbre (commutative) libre de rang fini.  Notons
$\OmegaBA$ le $\gB$-module des différentielles de~$\gB/\gA$,
$\normBA : \gB \to \gA$ la norme de $\gB/\gA$ et $\DiscBA$ le
discriminant d'une base de $\gB/\gA$ (défini au carré d'un
inversible près).

\begin {enumerate}
\item
Le $\gB$-module $\OmegaBA$ est de présentation finie et son idéal de
Fitting $\cF_0\big(\OmegaBA\big)$ (idéal de $\gB$), possède la
propriété suivante
$$
\fbox{$b \in \cF_0\big(\OmegaBA\big)  \implies
\DiscBA \ \mid\ \normBA(b)$\,.}
$$
\item
Supposons $\OmegaBA = 0$.  Alors $\DiscBA$ est inversible.
\end {enumerate}
\end {theorem}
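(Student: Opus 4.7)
La deuxième assertion est une conséquence immédiate de la première. En effet, si $\OmegaBA = 0$, alors $\cF_0(\OmegaBA) = \gB$, donc $1 \in \cF_0(\OmegaBA)$ et la divisibilité fournie par le point 1 entraîne $\DiscBA \mid \normBA(1) = 1$ dans $\gA$, c'est-à-dire que $\DiscBA$ est inversible. Toute la substance du théorème est donc concentrée dans le point 1, qui relie le zéroième idéal de Fitting du module des différentielles (essentiellement la différente de Kähler) au discriminant.

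Pour établir le point 1, le plan est de construire d'abord une présentation finie explicite de $\OmegaBA$. Soit $\ue = (e_1, \ldots, e_n)$ une base de $\gB/\gA$ avec table de multiplication $e_i e_j = \sum_k c_{ij}^k e_k$ (on peut supposer $e_1 = 1$). La règle de Leibniz appliquée aux relations quadratiques $e_ie_j - \sum_k c_{ij}^k e_k = 0$ donne
\[
e_j \, de_i + e_i \, de_j = \sum_k c_{ij}^k \, de_k.
\]
La suite exacte standard pour la présentation $\gB = \gA[\uX]/J$, où $J$ est engendré par ces relations quadratiques (plus la relation $X_1 - 1$), montre qu'elles engendrent le noyau de la surjection $\gB^n \twoheadrightarrow \OmegaBA$ envoyant la base canonique sur $(de_1, \ldots, de_n)$. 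Ceci fournit explicitement une matrice de présentation $M$ de $\OmegaBA$ dont les entrées sont des polynômes affines en les $e_k$ et les $c_{ij}^k$, et $\cF_0(\OmegaBA)$ est l'idéal de $\gB$ engendré par les mineurs $n \times n$ de $M$. En particulier $\OmegaBA$ est de présentation finie.

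Le cœur de la preuve sera alors l'identité algébrique liant la norme d'un élément de $\cF_0(\OmegaBA)$ au discriminant. Le cas monogène $\gB = \gA[X]/(f)$ sert de guide: on a $\cF_0(\OmegaBA) = (f'(\bar X))$ et $\normBA(f'(\bar X)) = \pm\DiscBA$, ce qui donne la divisibilité comme égalité. Dans le cas général, le plan est d'exhiber, pour chaque $n$-mineur $\mu_\sigma$ de $M$, une identité de la forme $\normBA(\mu_\sigma) = \DiscBA \cdot \alpha_\sigma$ avec $\alpha_\sigma \in \gA$, puis d'étendre par multiplicativité et $\gB$-linéarité à tout $b \in \cF_0(\OmegaBA)$. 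Le pont naturel entre $M$ et $\DiscBA = \det(\Tr(e_ie_j))$ passe par la formule $\Tr(e_ie_j) = \sum_k c_{ij}^k \Tr(e_k)$, qui exprime la matrice de Gram trace $T$ en fonction des mêmes constantes de structure $c_{ij}^k$ qui apparaissent dans les entrées de $M$.

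L'obstacle principal sera évidemment la construction et la vérification de cette identité algébrique. Deux stratégies semblent praticables. La première consiste à raisonner par universalité: travailler sur l'anneau universel $\ZZ[c_{ij}^k]$ quotienté par les contraintes de commutativité et d'associativité, puis se ramener au cas monogène par adjonction d'un élément primitif générique (avec extension des scalaires adaptée), le cas monogène étant alors traité par la formule classique. La seconde, plus directe et mieux en accord avec l'esprit \gui{identités algébriques} annoncé dans l'introduction, consiste à établir une identité matricielle de type Cramer reliant explicitement les mineurs de $M$ aux entrées de $T$ via les matrices de multiplication des $e_i$ dans la base $\ue$; la vérification demande un agencement combinatoire soigné mais produit l'identité sans passage à une clôture algébrique ni argument de spécialisation.
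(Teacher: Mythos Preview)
Your reduction of Point~2 to Point~1 is correct and matches the paper. For Point~1, however, your plan has a real gap in the extension step. You propose to establish $\normBA(\mu_\sigma) = \DiscBA \cdot \alpha_\sigma$ for each generating $n$-minor $\mu_\sigma$ and then ``extend by multiplicativity and $\gB$-linearity'' to all of $\cF_0(\OmegaBA)$. But the norm is multiplicative, not additive: a general element is a $\gB$-linear combination $b = \sum_\sigma c_\sigma \mu_\sigma$, and divisibility of each $\normBA(\mu_\sigma)$ by $\DiscBA$ says nothing about $\normBA(b)$. What would extend is a \emph{matrix} factorization $M_b = Q_b\,T_\uvep$ with $Q_b \in \MM_n(\gA)$, since $b\mapsto M_b$ is $\gA$-linear and multiplicative; your plan hints at this in the second strategy but never isolates it, and neither of your two strategies explains how to produce such a $Q_b$ for the minors of the structure-constant presentation. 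The first strategy (reduce to the monogenic case by a generic primitive element) is also problematic: free $\gA$-algebras of rank $n$ need not be monogenic, and adjoining a primitive element generically changes the algebra.

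The paper's route avoids these difficulties by passing through the enveloping algebra $\gB\otimes_\gA\gB$, an object absent from your plan. Let $J=\ker(\mu:\gB\otimes_\gA\gB\to\gB)$. Two short facts do all the work. First, $\cF_0(\OmegaBA)\subseteq\mu(\Ann(J))$: each Jacobian $n$-determinant lifts to a \emph{bezoutian} determinant $\delta_\uF\in\gB\otimes_\gA\gB$, and the Cramer identity $\widetilde U\,U=\det(U)\Id$ shows $\delta_\uF\in\Ann(J)$. Second, for \emph{every} $\delta\in\Ann(J)$, writing $\delta=\sum_i u_i\otimes\vep_i$ and $b=\mu(\delta)$, one has $M_b=P_\uu\,T_\uvep$ where $P_\uu\in\MM_n(\gA)$ is the coordinate matrix of the $u_i$; taking determinants gives $\normBA(b)=\det(P_\uu)\,\DiscBA$. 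Since $\Ann(J)$ is an ideal (indeed a $\gB$-submodule for both the left and right structures, which coincide on it), this covers all $b\in\mu(\Ann(J))\supseteq\cF_0(\OmegaBA)$ at once, and the extension problem never arises. In short, the missing idea is that $\Ann(J)$ is precisely the locus where the factorization $M_b=Q_b\,T_\uvep$ holds, and the bezoutian is the bridge that carries the Jacobian generators into it.
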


\Note La référence de base pour les \diles de Kähler est le livre \cite{Kunz}. 
Dans les ouvrages \cite{CACM} et \cite{ACMC}, le point \textsl{2.} du \thref{NetteImpliesTracicEtale} est démontré uniquement pour le cas où $\gA$ est un \cdi. Cet ouvrage décrit de manière complètement \cov de nombreuses bases de l'\alg commutative et donne en particulier les détails \ncrs pour la compréhension des \idfs et celle du module des \diles de Kähler et son traitement au moyen des matrices bezoutiennes. 
Les notions des syzygies triviales et de relateurs triviaux sont abordées dans la section IV-2. 
\eoe

\medskip Il est clair que le second point du \thref{NetteImpliesTracicEtale} est une conséquence directe du premier car
dans ce cas, $\cF_0\big(\OmegaBA\big) = \gB$ et, en appliquant
la propriété du point \textsl{1.} à $b=1$, on obtient l'inversibilité de
$\DiscBA$.

\smallskip Un corollaire facile du \tho consiste à le généraliser en affaiblissant l'hypothèse.

%c
%:     Corollary{corNetteImpliesTracicEtale}
\begin{corollary} \label{corNetteImpliesTracicEtale}
Considérons une \Alg $\gB$ qui est un \Amo \ptf.  
Alors les conclusions du \thref{corNetteImpliesTracicEtale} sont valables pour $\gB/\gA$.
%Si elle est nette ($\OmegaBA = 0$), alors elle est traciquement étale et à fortiori étale. 
 \end{corollary}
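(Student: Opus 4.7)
Le plan est de ramener le corolaire au \thref{NetteImpliesTracicEtale} par \lon. Un \Amo \ptf devient libre après \lon en des \eco: il existe des \eco $\sn$ de $\gA$ tels que chaque $\gB[1/s_i]$ soit un $\gA[1/s_i]$-module libre de rang fini (ce rang pouvant dépendre de $i$). On applique alors le \thref{NetteImpliesTracicEtale} à chacune des \Algs $\gB[1/s_i]/\gA[1/s_i]$, puis on recolle les conclusions via les \plgcs standard.

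Il faut d'abord vérifier que chacun des objets en jeu se comporte bien par \lon. Pour le module des \diles, on a l'\iso naturel $(\OmegaBA)[1/s] \simeq \Omega_{\gB[1/s]/\gA[1/s]}$. Les \idfs commutent à la \lon. Enfin, la norme et le discriminant d'une \Alg qui est un \Amo \ptf se définissent en utilisant l'\endo de multiplication (polynôme \cara d'un \elt vu comme \endo du \Amo \ptf), et ces invariants sont compatibles à la \lon. Dans le cas libre localement, ils coïncident bien avec la norme et le discriminant classiques. Le caractère \pf de $\OmegaBA$ est alors une \prt locale, et résulte du fait que chaque $\Omega_{\gB[1/s_i]/\gA[1/s_i]}$ est \pf par le \thref{NetteImpliesTracicEtale}.

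Pour la divisibilité voulue, soit $b \in \cF_0(\OmegaBA)$. Par compatibilité de l'\idf à la \lon, l'image de $b$ dans $\gB[1/s_i]$ appartient à $\cF_0(\Omega_{\gB[1/s_i]/\gA[1/s_i]})$, donc le \thref{NetteImpliesTracicEtale} appliqué à $\gB[1/s_i]/\gA[1/s_i]$ donne la relation $\Disc(\gB[1/s_i]/\gA[1/s_i]) \mid \norm_{\gB[1/s_i]/\gA[1/s_i]}(b)$ dans $\gA[1/s_i]$, qui s'identifie à $\DiscBA \mid \normBA(b)$ dans $\gA[1/s_i]$. Le \plgc pour la \dve dans $\gA$, appliqué aux \eco $\sn$, permet alors de conclure $\DiscBA \mid \normBA(b)$ dans $\gA$ lui-même. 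Le point \textsl{2.} du corolaire en découle en prenant $b = 1$ comme dans le \thref{NetteImpliesTracicEtale}.

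L'obstacle principal sera essentiellement vérificatif: contrôler soigneusement que les définitions intrinsèques de $\normBA$ et $\DiscBA$ pour une \Alg \ptf coïncident, après \lon, avec les définitions usuelles dans le cas libre, et que le \plgc invoqué pour la \dve s'applique bien (ce qui est le cas car la norme d'un \elt d'une \Alg \ptf est un \elt de $\gA$ dont la \dve se teste localement). Ces vérifications sont standard dans le cadre \cof développé dans \cite{CACM} et \cite{ACMC} et ne présentent pas de difficulté conceptuelle.
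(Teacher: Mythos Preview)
Your approach is correct and is exactly the same as the paper's: localise at comaximal elements $s_1,\dots,s_n$ so that each $\gB[1/s_i]$ becomes free of finite rank over $\gA[1/s_i]$, apply the \thref{NetteImpliesTracicEtale}, and recover the global conclusions by the concrete local-global principles. The paper's own proof is simply a terser version of yours, omitting the verifications (compatibility of $\Omega$, Fitting ideals, norm, discriminant with localisation, and the local-global principle for divisibility) that you spell out.
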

%--------- fin corollary ------------------------------- 
%
\begin{proof}
En effet après \lon en des \eco $\sn$ de $\gA$, l'\alg $\gB$ devient libre de rang fini, et les conclusions, parce qu'elles sont satisfaites dans chacune des $\gA_{s_i}$-\alg $\gB_i=\gA_{s_i}\otimes_\gA\gB$, sont satisfaites dans $\gB$.

\end{proof}

\smallskip On fera découler le point \textsl{1.} (du
\thref{NetteImpliesTracicEtale}) de l'inclusion $\cF_0(\OmegaBA) \subseteq \mu(\Ann(J))$ donnée dans le lemme~\ref{KahlerSubsetNoether}, qui suppose seulement que $\gB$ est une \Alg \pf,
et de l'identité matricielle donnée dans le point \textsl{2.} du
\thref{AnnJ-TracicIdentity} spécifique au cas où $\gB/\gA$ est
libre.

%%%
\section{Jacobien versus bezoutien}\label{secJacBez}

En ce qui concerne la démonstration du premier point du \thref{NetteImpliesTracicEtale}, on commence par étudier le cadre plus général
d'une algèbre $\gB/\gA$ de présentation finie pour laquelle on adopte
les notations suivantes. On fixe une présentation finie de $\gB/\gA$
en $n$ indéterminées $\uX = (X_1, \dots, X_n)$:
$$
\gB = \gA[\ux] = \gA[x_1, \dots, x_n] \simeq \gA[\uX] / I(\uX)
\quad \text{où $I(\uX)$ est un idéal de type fini de $\gA[\uX]$}
$$
On s'alloue deux autres jeux de $n$ indéterminées $\uY=(Y_1, \cdots, Y_n)$ et
$\uZ=(Z_1, \cdots, Z_n)$ de manière à \og relever \fg{}
les déterminants jacobiens en $\uX$ en des
déterminants bezoutiens en $(\uY,\uZ)$, cf. ci-après.

\smallskip

De manière plus structurelle, on présente la $\gA$-algèbre
$\BoB$ de la manière suivante:
$$
\BoB = \gA[\uy,\uz] = \gA[y_1,\dots,y_n, z_1,\dots,z_n]
\simeq \dfrac {\gA[\uY,\uZ]}{I(\uY) + I(\uZ)}
$$
la correspondance étant la suivante:
$$
f(\ux) \otimes g(\ux)  \enspace\longleftrightarrow\enspace f(\uy)g(\uz)
$$
Le morphisme surjectif $\mu \colon \BoB \twoheadrightarrow \gB$ défini par la multiplication n'est autre que l'évaluation $(\uY,\uZ) := \uX$
$$
\left\{\begin {array} {rcl}
\gA[\uY,\uZ] &\longrightarrow &\gA[\uX]\\[2mm]
f(\uY,\uZ)    &\longmapsto     &f(\uX,\uX)
\end {array}\right. 
\qquad\qquad
\mu :
\left\{\begin {array} {rcl}
\gA[\uy,\uz] &\longrightarrow &\gA[\ux]\\[2mm]
f(\uy,\uz)    &\longmapsto     &f(\ux,\ux)
\end {array}\right. 
$$
Notons $J = \ker\mu$. C'est l'idéal de $\BoB$ engendré par les
$b\otimes 1 - 1\otimes b$ pour $b \in \gB$. C'est aussi un
sous-\Bmo pour la structure de \Bmo à gauche $\BoB$.
Même chose à droite.

Comme $\gB = \gA[x_1, \dots, x_n]$, cet idéal $J$ est de type fini,
engendré par les $x_i\otimes 1 - 1\otimes x_i$.  Dans le modèle
$\gA[\uy,\uz]$, il est $n$-engendré:
$$
J \eqdef   \Ker\mu = \langle y_1-z_1, \dots, y_n-z_n\rangle
$$

%%%%
%\subsubsection*{Jacobien versus bezoutien}

Soit $f \in \gA[\uX]$. Il y a $n$ polynômes $(U_{j})_{1 \le j \le n}$ de $\gA[\uY,\uZ]$ tels que
$$
f(\uY) - f(\uZ) = \som_{j=1}^n (Y_j-Z_j) U_j(\uY,\uZ)
$$
Il y a un choix quasi-canonique pour les $U_j$. Pour $n=2$:
$$
\begin {array}{ccccc}
f(Y_1,Y_2) - f(Z_1,Z_2) &=& f(Y_1,Y_2) - f(Z_1,Y_2) &+& f(Z_1,Y_2) - f(Z_1,Z_2)
\\[2mm]
&=& (Y_1-Z_1) \frac{f(Y_1,Y_2) - f(Z_1,Y_2)}{Y_1-Z_1} &+&
(Y_2 - Z_2) \frac{f(Z_1,Y_2) - f(Z_1,Z_2)}{Y_2-Z_2}
\end {array}
$$
Et pour $n$ quelconque:
$$
U_j =
\dfrac{f(Z_1,\dots, Z_{j-1}, \cercle{$Y_j$}, Y_{j+1},\dots,Y_n) -
f(Z_1,\dots,Z_{j-1},\cercle{$Z_j$},Y_{j+1},\dots,Y_n)}
{Y_j - Z_j}
$$
Cela étant, soit $(\uF) = (F_1, \dots, F_n)$ un système de $n$ polynômes de $\gA[\uX]$.
On lui associe le \textsl{déterminant bezoutien} $\delta_\uF \in \BoB$ 
en écrivant:
$$
F_i(\uY) - F_i(\uZ) = \som_{j=1}^n (Y_j-Z_j) U_{ij}(\uY,\uZ)
$$
et en évaluant le déterminant bezoutien correspondant en $(\uy,\uz)$:
$$
\delta_\uF := \det (\Bez_{\uY,\uZ}(\uF))(\uy,\uz)
\qquad \text{où} \qquad
\Bez_{\uY,\uZ}(\uF) = (U_{ij})_{1\le i,j \le n} \in \MM_n(\gA[\uY,\uZ])
$$
Alors $\mu(\delta_\uF)$ est le déterminant jacobien de $\uF$ vu dans $\gB$:
$$
\mu(\delta_\uF) := \det (\Jac_\uX(\uF))(\ux)
\qquad \text{où} \qquad
\Jac_\uX(\uF) = (\partial F_i/\partial X_j)_{1\le i,j \le n} \in \MM_n(\gA[\uX])
$$

On verra dans la section \ref{DifferentesSection} que les deux
idéaux de~$\gB$ qui interviennent dans cette inclusion,
$\cF_0(\OmegaBA)$ d'une part, $\mu\big(\Ann(J)\big)$ sont
des \textsl{différentes} de~$\gB/\gA$. Le premier est la différente
$\DK(\gB/\gA)$ de Kälher, le second la différente $\DN(\gB/\gA)$
de Noether. On y prouvera que l'idéal~$J$ de $\BoB$ est un
$\BoB$-module de présentation finie, et que son
$0$-Fitting~$\cF_0(J)$, (un idéal de $\BoB$), a la propriété suivante
$$
\mu(\cF_0(J)) = \cF_0(\OmegaBA).
$$
Puisque $\cF_0(M) \subseteq \Ann(M)$ pour tout module $M$ de
présentation finie, en particulier $\cF_0(J) \subseteq \Ann(J)$, cela
fournira une \textsl{autre} démonstration du lemme \ref{KahlerSubsetNoether}.

\smallskip Mais ici nul besoin de cette section \ref{DifferentesSection} car la
démonstration donnée ci-dessous est élémentaire: elle utilise simplement que
$\widetilde U\, U = \det(U)\Id_n$ pour une matrice carrée
\hbox{$U \in \MM_n$}.

%%%
\begin {lemma}
\label{KahlerSubsetNoether}

Dans l'anneau $\BoB$, considérons  l'annulateur $\Ann(J)$ de l'idéal $J$.
Il est caractérisé par:
$$
\Ann(J) = \big\{\,\beta \in \BoB \mid
(b\otimes 1)\beta = (1\otimes b)\beta \enspace \forall\ b \in \gB \,\big\}.
$$
Dans le modèle $\BoB = \gA[\uy,\uz]$ on a 
$$
\Ann(J) = \big\{\,f(\uy,\uz)  \mid f(\uY,\uZ) \in \gA[\uY,\uZ] \;\mathrm{ et }\;
f(\uy,\uz)\times (y_i-z_i) = 0 \enspace \forall\ i=1, \dots, n\,\big\}.
$$
On a alors l'inclusion suivante d'idéaux de $\gB$:
$$
\cF_0(\OmegaBA) \enspace\subseteq\enspace \mu\big(\Ann(J)\big).
$$
\end {lemma}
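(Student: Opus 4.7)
\medskip\noindent\textsl{Plan de démonstration.}

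Pour la double caractérisation de $\Ann(J)$, je vérifierais que les deux familles $\sO{b\otimes 1 - 1\otimes b \mathrel{;} b\in\gB}$ et $\sO{y_i-z_i \mathrel{;} i\in\lrbn}$ engendrent le même \id de $\BoB$. Les \elts $x_i\otimes 1 - 1\otimes x_i$ coïncident déjà avec les $y_i-z_i$; réciproquement, en posant $\delta(b) := b\otimes 1 - 1\otimes b$, les \idts
\[
\delta(b_1+b_2) = \delta(b_1)+\delta(b_2),\qquad
\delta(b_1 b_2) = (b_1\otimes 1)\,\delta(b_2) + (1\otimes b_2)\,\delta(b_1)
\]
montrent par une \recu \elr sur la structure de \Alg que tout $\delta(b)$ est \coli à \coes dans $\BoB$ des $\delta(x_i)=y_i-z_i$. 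L'annulateur d'un \id étant caractérisé par l'annulation de n'importe lequel de ses \syss de \gtrs, les deux descriptions en résultent.

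Pour l'inclusion $\cF_0(\OmegaBA) \subseteq \mu(\Ann(J))$, je fixerais une \pn finie $I(\uX) = \gen{\lfs}$. Le module $\OmegaBA$ admet alors la \pn standard $\gB^s \lora \gB^n \lora \OmegaBA \lora 0$ de matrice jacobienne $\big((\partial f_k/\partial X_i)(\ux)\big)_{i\in\lrbn,\,k\in\lrbs}$, de sorte que $\cF_0(\OmegaBA)$ est engendré par les mineurs d'ordre $n$ de cette matrice, \cad les $\det\big((\partial f_{k_j}/\partial X_i)(\ux)\big)_{1\le i,j\le n}$ obtenus en extrayant $n$ \gtrs $f_{k_1},\dots,f_{k_n}$. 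Or chacun de ces mineurs s'écrit $\mu(\delta_\uF)$ pour $\uF=(f_{k_1},\dots,f_{k_n})$, d'après la construction du déterminant bezoutien rappelée ci-dessus. Il suffira donc d'établir le point clé: \emph{si $\uF=(F_1,\dots,F_n)$ est un \sys de $n$ \elts de $I(\uX)$, alors $\delta_\uF \in \Ann(J)$}.

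Pour ce point, on observe que $F_i(\uy)=F_i(\uz)=0$ dans $\BoB$ (puisque $F_i\in I$), donc l'\idt $F_i(\uY) - F_i(\uZ) = \som_{j=1}^n (Y_j-Z_j)\,U_{ij}(\uY,\uZ)$ évaluée en $(\uy,\uz)$ donne $\som_{j=1}^n (y_j-z_j)\,U_{ij}(\uy,\uz) = 0$ dans $\BoB$ pour chaque $i$. Autrement dit, la matrice bezoutienne $B = \Bez_{\uY,\uZ}(\uF)(\uy,\uz) \in \MM_n(\BoB)$ annule le vecteur colonne $\tra{(y_1-z_1,\dots,y_n-z_n)}$. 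En multipliant à gauche par l'adjointe classique $\wi B$ et en invoquant l'\idt $\wi B\cdot B = \det(B)\,\Id_n$, on obtient $\det(B)\cdot (y_j-z_j) = 0$ dans $\BoB$ pour tout $j$, \cad $\delta_\uF = \det(B) \in \Ann(J)$. Le seul \og point délicat \fg{} de cette preuve est ce passage matrice-vecteur $\to$ annulation scalaire par l'adjointe, qui se réduit à l'\idt de Cramer $\wi U\,U = \det(U)\,\Id_n$, conformément à l'annonce des auteurs.
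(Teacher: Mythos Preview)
Your proposal is correct and follows essentially the same approach as the paper: lift Jacobian minors to bezoutien determinants $\delta_\uF$, then use the Cramer identity $\wi U\,U=\det(U)\,\Id_n$ to show $\delta_\uF\,(y_j-z_j)=0$, hence $\delta_\uF\in\Ann(J)$ and $\mu(\delta_\uF)\in\mu(\Ann(J))$. The only cosmetic difference is that the paper applies the adjugate in $\gA[\uY,\uZ]$ before reducing modulo $I(\uY)+I(\uZ)$, whereas you reduce first and apply the adjugate in $\BoB$; both orderings are equivalent. Your added justification of the two characterisations of $\Ann(J)$ via the Leibniz-type identity for $\delta(b)=b\otimes1-1\otimes b$ is a welcome detail the paper leaves implicit.
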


%%%%%
\begin {proof} 

L'idéal $\cF_0(\OmegaBA)$ est engendré par les évaluations en $\ux$
des déterminants jacobiens $\det(\Jac_\uX(\uF))$ lorsque $(\uF) =
(F_1, \dots, F_n)$ parcourt les $n$-systèmes de polynômes de $I(\uX)$.
Il suffit donc de démontrer qu'un tel générateur est dans
$\mu\big(\Ann(J)\big)$.

Avec la notation précédente $U = \Bez_{\uY,\uZ}(\uF)$, écrivons:
$$
\begin {bmatrix} F_1(\uY)-F_1(\uZ)\\\vdots\\ F_n(\uY)-F_n(\uZ) \end {bmatrix}
= U
\begin {bmatrix} Y_1-Z_1\\\vdots\\ Y_n-Z_n \end {bmatrix}
\qquad \text{d'où} \qquad
\det(U) \begin {bmatrix} Y_1-Z_1\\\vdots\\ Y_n-Z_n \end {bmatrix} =
\widetilde {U}
\begin {bmatrix} F_1(\uY)-F_1(\uZ)\\\vdots\\ F_n(\uY)-F_n(\uZ) \end {bmatrix}
$$
En conséquence $\det(U)(Y_i-Z_i) \in I(\uY) + I(\uZ)$. En évaluant
en $(\uy,\uz)$, on obtient dans $\BoB$:
$$
\delta_\uF \times (y_i-z_i) = 0  \qquad \forall\ i=1, \dots, n.
$$
Ainsi $\delta_\uF \in \Ann(J)$, ce qui montre que $\det(\Jac_\uX(\uF))(\ux)$, égal à
$\mu(\delta_\uF)$, appartient à $\mu(\Ann(J))$.
\end {proof}

\section{Une identité tracique lorsque $\gB/\gA$ est libre finie}
%%%%%%%

\subsection{Le \tho principal}
On note $\mu$ le morphisme  
\[
\mu \colon \BoB \to \gB,\,x\otimes y\mt xy
\]
 et $J=\ker\mu$. 

\begin {theorem} [une identité tracique attachée à $\mu(\Ann(J))$ lorsque $\gB/\gA$ est libre]
\label{AnnJ-TracicIdentity}~\\
On suppose $\gB/\gA$ libre de rang $n$. On considère un \elt arbitraire $\delta\in \Ann(J)$. Nous notons
\begin{itemize}
\item $b=\mu(\delta)$,
\item $\uvep =(\vep_1, \dots, \vep_n)$ une base de $\gB/\gA$,
\item $T_\uvep\eqdef\big(\TrBA(\vep_i\vep_j)\big)_{1\le i,j \le n}$ la matrice tracique de cette base,
\item $\Disc(\uvep) = \det(T_\uvep)$  (c'est le discriminant de $\gB/\gA$ défini à une unité multiplicative près), 
\item  $M_b$ la matrice de multiplication par $b$ dans la base $\uvep$. 
\end{itemize}

%Désignons par $\uvep =(\vep_1, \dots, \vep_n)$ une base de $\gB/\gA$, par~$T_\uvep$ sa
%matrice tracique $\big(\TrBA(\vep_i\vep_j)\big)_{1\le i,j \le n}$, par
%$\Disc(\uvep) = \det(T_\uvep)$ et, pour $b \in \gB$, par $M_b$ la
%matrice de multiplication par $b$ dans la base $\uvep$.

\smallskip \noindent 
On écrit $\delta \in \Ann(J) \subseteq \BoB$  de la manière suivante:
$$
\delta = u_1\otimes\vep_1 + \cdots + u_n\otimes\vep_n
\qquad
u_j \in \gB
$$
Notons $P_\uu  = (u_{ij}) \in \MM_n(\gA) $ la matrice dont la
\textsl{ligne} $i$ est celle des coordonnées de $u_i$ dans la base $\uvep$:
$$
u_i = \som_j u_{ij}\vep_j
$$
\begin {enumerate}
\item
Avec $b = \mu(\delta)$, on a les deux égalités matricielles
dans $\MM_n(\gA)$:
$$
M_b = \big(\TrBA(u_i\vep_j)\big)_{1\le i,j \le n} = P_\uu \,T_\uvep
$$
\item
En prenant les déterminants, on obtient
$\normBA(b) = \det(P_\uu )\Disc(\uvep)$.
\end {enumerate}

\noindent En conséquence, $\DiscBA$ divise $\normBA(b)$ pour tout
$b \in \mu(\Ann(J))$.
\end {theorem}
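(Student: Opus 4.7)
La seconde égalité $(\TrBA(u_i\vep_j))_{ij} = P_\uu T_\uvep$ résultera immédiatement du développement $u_i = \sum_k u_{ik}\vep_k$: on aura $\TrBA(u_i\vep_j) = \sum_k u_{ik}\TrBA(\vep_k\vep_j) = (P_\uu T_\uvep)_{ij}$. Le point~\textsl{2.} s'en déduit en prenant les déterminants (car $\normBA(b) = \det M_b$). Le cœur de la démonstration réside donc dans la première égalité, $M_b = (\TrBA(u_i\vep_j))_{ij}$.

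Pour l'établir, on introduit l'endomorphisme $\gA$-linéaire $f\colon\gB\to\gB$ défini par $f(x) = \sum_j \TrBA(u_j x)\vep_j$, dont la matrice dans la base $\uvep$ est exactement $(\TrBA(u_i\vep_j))_{ij}$. L'objectif est d'identifier $f$ à la multiplication $M_b$ par $b$. Première étape: montrer que $f$ est $\gB$-linéaire. En appliquant aux deux membres de l'identité $(x\otimes 1)\delta = (1\otimes x)\delta$ (qui exprime $\delta\in\Ann(J)$) l'application $\gA$-linéaire $\BoB\to\gB$, $a\otimes b \mapsto \TrBA(ay)\,b$ (où $y\in\gB$ est fixé), on obtient $f(xy) = x\,f(y)$. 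Ainsi $f$ coïncide avec la multiplication $M_c$ par $c = f(1) = \sum_j\TrBA(u_j)\vep_j$.

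Reste la seconde étape: prouver $c = b$. C'est le point technique délicat, où l'argument général par la forme trace échoue: on peut bien obtenir $\TrBA((b-c)x) = 0$ pour tout $x\in\gB$ (en comparant deux calculs de $\tr(M_x\circ f)$), mais en l'absence d'hypothèse d'inversibilité sur $\Disc(\uvep)$ cela ne permet pas de conclure. On procèdera donc par un calcul direct dans la base $\uvep$: en notant $\vep_i\vep_j = \sum_k c_{ij}^k\vep_k$ les constantes de structure, la condition $\delta\in\Ann(J)$, explicitée dans la $\gA$-base $\{\vep_p\otimes\vep_q\}$ de $\BoB$, fournira une famille de relations $\sum_q c_{\ell q}^a u_{bq} = \sum_p c_{\ell p}^b u_{pa}$. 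Le coefficient de $\vep_r$ dans $b$ est $\sum_{j,k} u_{jk} c_{kj}^r$, et celui dans $c$ est $\TrBA(u_r) = \sum_{k,s} u_{rk} c_{ks}^s$ (via $\TrBA(\vep_k) = \sum_s c_{ks}^s$). En spécialisant les relations aux indices $(\ell,a,b)=(s,s,r)$, puis en sommant sur $s$ et en invoquant la commutativité de $\gB$ ($c_{ks}^s = c_{sk}^s$ et $c_{jk}^r = c_{kj}^r$), on ramènera l'expression de $c_r$ à celle de $b_r$. Cette vérification combinatoire d'indices, réconciliant les deux descriptions de $\delta$ comme combinaison linéaire des $1\otimes\vep_j$ à coefficients dans $\gB$ ou des $\vep_p\otimes\vep_q$ à coefficients dans $\gA$, constitue l'obstacle principal de la preuve.
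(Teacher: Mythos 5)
Votre démonstration est correcte, et elle emprunte un chemin réellement différent de celui de l'article: les deux preuves reposent sur les mêmes deux ingrédients (l'égalité matricielle $M_b=\big(\TrBA(u_i\vep_j)\big)_{ij}$ et l'identité tracique $b=\som_k\TrBA(u_k)\vep_k$), mais avec les rôles \gui{conceptuel} et \gui{calculatoire} échangés. L'article établit d'abord l'identité tracique par un argument de trace (lemme \ref{AnnJtrace}): la multiplication par $\delta$ dans le $\gB$-module à gauche $\BoB$ est l'endomorphisme $\beta\mapsto\mu(\beta)\delta$, de rang $\le 1$, donc de trace $\mu(\delta)$; puis il en déduit l'égalité matricielle au moyen du critère d'appartenance à $\Ann(J)$ exprimé en constantes de structure (lemme \ref{inAnnJ}). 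Vous procédez en sens inverse: l'égalité matricielle vous est donnée par un argument fonctionnel élégant --- la $\gB$-linéarité de $f(x)=\som_j\TrBA(u_jx)\vep_j$, obtenue en appliquant $a\otimes b'\mapsto\TrBA(ay)\,b'$ à l'identité $(x\otimes 1)\delta=(1\otimes x)\delta$, donne $f=M_c$ avec $c=f(1)=\som_j\TrBA(u_j)\vep_j$ --- et c'est l'identité tracique $c=b$ que vous ramenez à un calcul d'indices. Ce calcul, que vous ne faites qu'esquisser au futur, se boucle effectivement: vos relations $\som_q c^a_{\ell q}u_{bq}=\som_p c^b_{\ell p}u_{pa}$ traduisent exactement $\delta\in\Ann(J)$ dans la $\gA$-base $\{\vep_p\otimes\vep_q\}$, et la spécialisation $(\ell,a,b)=(s,s,r)$ sommée sur $s$ donne bien $c_r=\som_{q,s}u_{rq}\,c^s_{qs}=\som_{p,s}u_{ps}\,c^r_{sp}=b_r$ grâce à la symétrie $c^s_{sq}=c^s_{qs}$ des constantes de structure. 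Votre mise en garde est par ailleurs juste et bienvenue: la forme trace n'étant pas supposée non dégénérée, l'argument \gui{$\TrBA((b-c)x)=0$ pour tout $x$} ne permettrait pas de conclure. Bilan: votre astuce de $\gB$-linéarité rend l'égalité matricielle quasi immédiate et dispense du lemme \ref{inAnnJ}; en contrepartie, pour l'étape $c=b$, l'argument de l'article (trace d'un endomorphisme de rang $\le 1$) est nettement plus économique que votre vérification combinatoire, et vous pourriez l'y substituer pour alléger votre seconde étape.
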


%%%%
\subsection {Des rappels d'ordre général}

On rappelle des résultats bien connus. 

\smallskip Tout d'abord $\BoB$ est muni de
deux structures de $\gB$-modules, une à gauche, l'autre à droite:
pour $b \in \gB$, $\beta \in \BoB$
$$
b.\beta = (b\otimes 1)\beta,
\qquad\qquad
\beta.b = \beta(1\otimes b)
$$

%%%%
\begin {fact} ~%\leavevmode

\begin {enumerate}
\item
Sur $\Ann(J)$, les deux structures (à gauche et à doite) de $\gB$-module de $\BoB$
coïncident.
De plus pour $\delta \in \Ann(J)$ et $\beta\in\BoB$ on a
$$
\mu(\beta)\,\delta = \delta\,\mu(\beta) = \beta\delta.
$$

\item
L'idéal $J$ de $\BoB$ et le \Bmo $\Ann(J)$ sont stables
par l'involution $\swap : \BoB \to \BoB$ qui réalise $x\otimes y \mapsto
y\otimes x$.
\end {enumerate}
\end {fact}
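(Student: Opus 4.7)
Le plan est le suivant. Pour le point \textsl{1.}, on part de la caractérisation de $\Ann(J)$ rappelée dans le lemme~\ref{KahlerSubsetNoether}: un \elt $\delta$ est dans $\Ann(J)$ \ssi $(b\otimes 1)\delta = (1\otimes b)\delta$ pour tout $b\in\gB$, c'est-à-dire \ssi $b.\delta = \delta.b$. Autrement dit, les deux structures de \Bmo (à gauche et à droite) coïncident sur $\Ann(J)$, ce qui établit la moitié de l'énoncé. Reste à vérifier les \egts $\beta\delta = \mu(\beta)\,\delta = \delta\,\mu(\beta)$ pour $\beta\in\BoB$ et $\delta\in\Ann(J)$. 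Par $\gA$-linéarité en $\beta$, on se ramène au cas d'un tenseur élémentaire $\beta = b_1\otimes b_2$. On factorise alors $\beta = (b_1\otimes 1)(1\otimes b_2)$, et comme $(1\otimes b_2)\delta = (b_2\otimes 1)\delta$, on obtient
\[
\beta\delta \,=\, (b_1\otimes 1)(b_2\otimes 1)\delta \,=\, (b_1 b_2\otimes 1)\delta \,=\, \mu(\beta)\cdot\delta\,.
\]
L'\egt symétrique $\delta\,\mu(\beta) = \beta\delta$ se traite à l'identique à partir de la structure à droite.

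Pour le point \textsl{2.}, l'involution $\swap$ est un \auto d'anneau de $\BoB$ (induit par l'échange des deux facteurs du produit tensoriel). Elle envoie chaque générateur $y_i - z_i$ de $J$ sur son opposé $z_i - y_i$, qui reste dans $J$; donc $\swap(J)\subseteq J$, et comme $\swap^{2} = \Id$, on a même $\swap(J) = J$. Pour la stabilité de $\Ann(J)$, on prend $\delta\in\Ann(J)$ et $j\in J$ quelconques. Comme $\swap(j)\in J$, on a $\delta\cdot\swap(j) = 0$; en appliquant $\swap$ à cette \egt et en utilisant que $\swap$ est un \homo d'anneaux, il vient $\swap(\delta)\cdot j = 0$, ce qui signifie $\swap(\delta)\in\Ann(J)$.

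Ces deux points ne présentent pas de véritable difficulté: le cœur du point \textsl{1.} est la factorisation $b_1\otimes b_2 = (b_1\otimes 1)(1\otimes b_2)$ dans $\BoB$ combinée à la caractérisation élémentaire déjà connue de $\Ann(J)$; le point \textsl{2.} se ramène au fait que $\swap$ est une involution d'anneau qui stabilise les générateurs de $J$. Le seul endroit où il faudra rester attentif est l'extension par linéarité de l'\egt $\beta\delta = \mu(\beta)\delta$ des tenseurs élémentaires à tout élément de $\BoB$, mais elle est immédiate puisque chaque membre est $\gA$-linéaire en $\beta$.
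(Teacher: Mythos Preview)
Your proof is correct. The overall structure matches the paper's, but two local arguments differ.

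For the equality $\mu(\beta)\,\delta = \beta\delta$, you reduce by $\gA$-linéarité to elementary tensors and factor $b_1\otimes b_2 = (b_1\otimes 1)(1\otimes b_2)$. The paper instead observes in one stroke that $\mu(\beta)\otimes 1 - \beta \in \Ker(\mu) = J$, hence $(\mu(\beta)\otimes 1 - \beta)\delta = 0$; this avoids any reduction to generators of $\BoB$. For the stability of $J$ under $\swap$, you check it on the generators $y_i - z_i$, while the paper uses the intrinsic identity $\mu\circ\swap = \mu$, which gives $\swap(\Ker\mu) = \Ker\mu$ directly. Your arguments are perfectly valid and slightly more explicit; the paper's are a touch more conceptual and independent of any chosen presentation.
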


%%%
\begin {proof} %\leavevmode

\textsl{1.}
Pour $b\in \gB$, il faut vérifier que $(b\otimes 1)\delta = (1\otimes b)\delta$,
ce qui est immédiat car $b\otimes 1 - 1\otimes b \in J$.

\noindent L'égalité $\mu(\beta)\,\delta = \beta\delta$ résulte du fait que
$\mu(\beta)\otimes 1 - \beta \in \Ker(\mu) \eqdef   J$.

\smallskip \noindent 
\textsl{2.} La stabilité de $J$ résulte de  $J = \ker\mu$ et de $\mu \circ \swap = \mu$.
Elle implique la stabilité de $\Ann(J)$.
\end {proof}

%%%%%%
\subsection {Identités traciques lorsque $\gB/\gA$ est libre de rang $n$}

On suppose dans la suite que $\gB/\gA$ est libre de rang $n$ de base $\uvep = (\uvep_1, \dots, \uvep_n)$.

%%%
\begin {lemma}~ %\leavevmode
\label{AnnJtrace}

\begin {enumerate}
\item
On munit $\BoB$ de la structure de \Blg à gauche.\\
Alors pour $\beta = \som_i x_i\otimes y_i \in  \BoB$ on obtient
$$
\Tr\nolimits_{(\BoB)/\gB}(\beta) = \som_i x_i\TrBA(y_i).
$$

\item
Lorsque $\delta = \som_i x_i\otimes y_i$ appartient à $\Ann(J)$, on obtient
$$
\mu(\delta) =   \som_i x_iy_i =
\som_i x_i\TrBA(y_i) = \som_i y_i\TrBA(x_i).
$$
\end {enumerate}
\end {lemma}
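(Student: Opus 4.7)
Le plan est de traiter les deux points successivement, en exploitant que $\BoB$ est un $\gB$-module à gauche libre de rang $n$, de base $(1\otimes \vep_j)_{1\le j\le n}$ héritée de la base $\uvep$ de $\gB/\gA$.

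Pour le point \textsl{1.}, je calcule directement la matrice de la multiplication par $\beta=\som_i x_i\otimes y_i$ dans cette base. En partant de $(1\otimes\vep_j)\beta = \som_i x_i\otimes \vep_j y_i$ et en écrivant $\vep_j y_i = \som_k a_{kj}^{(i)}\vep_k$ avec $a_{kj}^{(i)}\in\gA$ (la matrice $(a_{kj}^{(i)})_{kj}$ étant celle de la multiplication par $y_i$ dans la base $\uvep$), on lit comme coefficient diagonal $\som_i a_{jj}^{(i)} x_i$, dont la somme sur $j$ donne $\som_i x_i \TrBA(y_i)$.

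Pour le point \textsl{2.}, la première égalité $\mu(\delta)=\som_i x_iy_i$ est la définition même de $\mu$. L'argument central pour la deuxième repose sur le fait précédent: si $\delta\in\Ann(J)$ alors $\beta\delta = \mu(\beta)\delta$ pour tout $\beta\in\BoB$. Autrement dit, l'endomorphisme \og multiplication par $\delta$\fg{} du $\gB$-module à gauche $\BoB$ se factorise en $g\circ\mu$, où $g: \gB\to\BoB$, $c\mapsto c\delta$, est $\gB$-linéaire à gauche. L'invariance cyclique de la trace, $\Tr(g\circ\mu)=\Tr(\mu\circ g)$, valide car $\BoB$ et $\gB$ sont libres de type fini sur $\gB$, combinée à l'observation que $\mu\circ g$ est la multiplication par $\mu(\delta)$ sur $\gB$ (dont la trace vaut $\mu(\delta)$), donne $\Tr\nolimits_{(\BoB)/\gB}(\delta) = \mu(\delta)$. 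Confronté au point \textsl{1.}, cela établit $\mu(\delta) = \som_i x_i\TrBA(y_i)$.

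Pour la dernière égalité $\mu(\delta) = \som_i y_i \TrBA(x_i)$, il suffit d'appliquer l'involution $\swap$, qui préserve $\Ann(J)$ et vérifie $\mu\circ\swap=\mu$: ainsi $\swap(\delta)=\som_i y_i\otimes x_i$ appartient à $\Ann(J)$ et a même image par $\mu$ que $\delta$, et l'on applique l'égalité déjà établie à $\swap(\delta)$. La principale source de vigilance est la gestion rigoureuse des conventions (action à gauche versus droite, ordre des indices dans la matrice de multiplication par $y_i$); la méthode elle-même ne rencontre pas d'obstacle sérieux car tout se ramène à des manipulations standard de trace et à l'identité $\beta\delta=\mu(\beta)\delta$ pour $\delta\in\Ann(J)$.
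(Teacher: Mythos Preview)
Votre proposition est correcte et suit essentiellement la même démarche que l'article. Pour le point~\textsl{1.}, vous calculez directement la matrice de la multiplication par $\beta$ dans la base $(1\otimes\vep_j)$, là où l'article se ramène par linéarité à $\beta = x\otimes y$ puis identifie la matrice à $xM_y$; c'est le même calcul. Pour le point~\textsl{2.}, l'article invoque la formule de trace d'un endomorphisme de rang~$\leq 1$ (si $v(y)=\alpha(y)x$ alors $\trace(v)=\alpha(x)$, appliquée avec $\alpha=\mu$, $x=\delta$), tandis que vous passez par la factorisation $g\circ\mu$ et l'invariance cyclique $\Tr(g\circ\mu)=\Tr(\mu\circ g)$: c'est précisément ainsi que l'on démontre la formule de rang~$1$, donc les deux arguments sont \eqvs. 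L'usage de $\swap$ pour la dernière égalité est identique dans les deux cas.
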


%%%
\begin {proof} %\leavevmode

\textsl{1.} Il suffit de le vérifier pour $\beta = x\otimes y$. Pour
la structure de $\gB$-module à gauche de $\BoB$, 
$1\otimes \uvep$ est une $\gB$-base. La matrice de la multiplication
par $x\otimes y$ dans cette base $1\otimes\uvep$ est
$xM_y \in \MM_n(\gB)$ où $M_y \in \MM_n(\gA)$ est la
matrice de multiplication par~$y$ dans la base $\uvep$.
D'où:
$$
\Tr\nolimits_{(\BoB)/\gB}(x\otimes y) = \trace(xM_y) =
x\, \trace(M_y) = x\TrBA(y)
$$

\smallskip \noindent 
\textsl{2.}
Soit un $\gB$-module libre $E$, $x \in E$ et $\alpha\in E^\star$.
L'endomorphisme $v$ de $E$ de rang $\le 1$ associé à $(x,\alpha)$ est
celui défini par $y \mapsto \alpha(y)x$; sa trace est $\trace(v)
= \alpha(x)$.

\noindent Regardons la multiplication par $\delta$ sur $\BoB$ comme un
endomorphisme du $\gB$-module à gauche $\BoB$.  Comme $\delta\beta
= \mu(\beta)\delta$ pour $\beta \in \BoB$, on peut appliquer la
remarque précédente à $E = \BoB$, $x = \delta$, $\alpha = \mu$,
l'endomorphisme $v$ étant la multiplication par~$\delta$.  Donc:
$$
\Tr\nolimits_{(\BoB)/\gB}(\delta) = \mu(\delta)
$$
En utilisant le point \textsl{1.}, on obtient ainsi la première égalité du point \textsl{2.}
La seconde égalité s'obtient en appliquant la première à
$\swap(\delta)=\som_i y_i \otimes x_i$ qui appartient également à $\Ann(J)$.
\end {proof}

%%%
\begin {lemma} [un critère d'appartenance à $\Ann(J)$] %\leavevmode
\label {inAnnJ}

Soit $t \in \BoB$ que l'on écrit sur la $\gB$-base $1\otimes \uvep$
du $\gB$-module $\BoB$ à gauche:
$$
t = u_1\otimes \vep_1 + \cdots + u_n\otimes\vep_n, \qquad u_i \in \gB.
$$
Notons $c^{(k)}_{ij} \in \gA$ les constantes de structure définies
par $\vep_i\vep_j = \sum_k c^{(k)}_{ij} \vep_k$.

\smallskip \noindent Alors $t \in \Ann(J)$ si et seulement si
$$
u_i\vep_j = \som_k c^{(i)}_{kj} u_k \qquad \forall\ i,j.
$$
\end {lemma}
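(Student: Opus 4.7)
The plan is to use the fact that $\Ann(J)$ is characterized by the equalities $(b\otimes 1)\,t = (1\otimes b)\,t$ for all $b\in\gB$, and to exploit the $\gB$-basis $1\otimes\uvep$ of the left $\gB$-module $\BoB$ to translate this into scalar identities.

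First I would observe that the condition $(b\otimes 1)\,t = (1\otimes b)\,t$ is $\gA$-linear in $b$, so it suffices to test it on the basis elements $b=\vep_j$, for $j=1,\dots,n$. This reduces the problem to checking $n$ identities in $\BoB$.

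Next I would expand both sides of $(\vep_j\otimes 1)\,t = (1\otimes\vep_j)\,t$ in the left-$\gB$-basis $1\otimes\uvep$. On the left, using that $\vep_j$ commutes with each $u_i$ in $\gB$,
$$
(\vep_j\otimes 1)\,t \;=\; \som_i (u_i\vep_j)\otimes \vep_i.
$$
On the right, using the structure constants,
$$
(1\otimes\vep_j)\,t \;=\; \som_i u_i\otimes \vep_j\vep_i
\;=\; \som_{i,k} c^{(k)}_{ji}\, u_i\otimes \vep_k
\;=\; \som_i \Bigl(\som_k c^{(i)}_{jk}\,u_k\Bigr)\otimes \vep_i.
$$
By uniqueness of expansion in the basis $1\otimes\uvep$, equality holds \ssi for every pair $(i,j)$ we have $u_i\vep_j = \som_k c^{(i)}_{jk}\,u_k$.

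Finally, commutativity of $\gB$ gives the symmetry $\vep_k\vep_j = \vep_j\vep_k$, hence $c^{(i)}_{jk} = c^{(i)}_{kj}$, which matches the form stated in the lemma. The only real care needed is in the bookkeeping of indices between the structure constants and the coefficients of $\otimes\vep_i$; there is no genuine obstacle.
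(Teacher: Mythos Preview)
Your proof is correct and follows essentially the same approach as the paper: reduce $t\in\Ann(J)$ to the equalities $(\vep_j\otimes 1)t=(1\otimes\vep_j)t$, expand both sides in the left $\gB$-basis $1\otimes\uvep$, and compare coefficients. The only cosmetic difference is that the paper arrives directly at $\som_k c^{(i)}_{kj}u_k$ (by writing $t.\vep_j=\som_k u_k\otimes\vep_k\vep_j$) while you first obtain $\som_k c^{(i)}_{jk}u_k$ and then invoke the symmetry $c^{(i)}_{jk}=c^{(i)}_{kj}$; both are fine.
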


%%%
\begin {proof} %\leavevmode

On va utiliser, d'une part, pour $x_1, \cdots, x_n \in \gB$ 
$$
\som_i x_i \otimes \vep_i = 0   \iff   x_1 = \cdots = x_n = 0,
$$
et d'autre part, $t \in \Ann(J)$ si et seulement si $\vep_j.t = t.\vep_j$ pour tout $j$.

\noindent Montrons, pour $j$ fixé, que
$$
\som_i \Bigl( u_i\vep_j - \som_k c^{(i)}_{kj}u_k\Bigr) \otimes \vep_i =
\vep_j.t - t.\vep_j.
$$
Il en résultera l'équivalence annoncée.

\noindent Il est clair que $\som_i u_i\vep_j \otimes\vep_i = \vep_j.t$. Pour l'autre
somme, utilisons $c^{(i)}_{kj}u_k\otimes \vep_i = u_k\otimes c^{(i)}_{kj}\vep_i$ On obtient
\begin{displaymath}
\som_i \Bigr(\som_k c^{(i)}_{kj}u_k\Bigr) \otimes \vep_i =
\som_k u_k  \otimes \som_i c^{(i)}_{kj} \vep_i =
\som_k u_k  \otimes \vep_k\vep_j = t.\vep_j.
\qedhere
\end{displaymath}
\end {proof}

%%%
\begin{proof}[\textsl{Démonstration du théorème \ref{AnnJ-TracicIdentity}}]~

\noindent \textsl{1.} D'après le lemme \ref{AnnJtrace} on a
$b \eqdef   \mu(\delta) =\som_k \TrBA(u_k)\vep_k$.
Donc
\[
\begin{array}{rcl}
b\vep_j &=& \som_k \TrBA(u_k)\vep_k\vep_j = \som_{k,i} \TrBA(u_k) c^{(i)}_{kj}\vep_i
\\[2mm]
&=&
\som_i \TrBA\bigl(\som_k c^{(i)}_{kj}u_k\bigr)\vep_i.
\end {array}
\]
Or $\sum_k c^{(i)}_{kj}u_k = u_i\vep_j$ d'après le
lemme \ref{inAnnJ}.  Il vient $b\vep_j
= \sum_i \TrBA(u_i\vep_j) \vep_i$, d'où l'égalité matricielle $M_b
= \big(\TrBA(u_i\vep_j\big)_{1\le i,j \le n}$. C'est la première égalité
à démontrer dans le  \thref{AnnJ-TracicIdentity}.

\smallskip \noindent La seconde égalité résulte de
\begin{displaymath}
(P_\uu  T_\vep)_{ij} = \som_k u_{ik} \TrBA(\vep_k\vep_j) =
\TrBA\Bigl( \som_k u_{ik} \vep_k\vep_j \Bigr) =
\TrBA(u_i\vep_j).
\qedhere
\end{displaymath}
\end {proof}

\begin{proof}[\textsl{Démonstration du théorème \ref{NetteImpliesTracicEtale}}]~
Il suffit de démontrer le point \textsl{1}.
C'est une conséquence directe de l'inclusion $\cF_0(\OmegaBA) \subseteq \mu(\Ann(J))$ donnée dans le lemme~\ref{KahlerSubsetNoether} et de l'identité matricielle donnée dans le point \textsl{2.} du
\thref{AnnJ-TracicIdentity}.
\end {proof}

\section {Différentes: $\DK(\gB/\gA)$ de Kähler et $\DN(\gB/\gA)$ de Noether}
\label{DifferentesSection}

\subsection{Présentation finie}
%%%%
\begin {theorem} %\leavevmode
\label{PresentationFinieJ}

Soit $\gB/\gA$ une algèbre de présentation finie, on note $\mu$ l'\Ali de multiplication
\[\mu : \BoB \to \gB,\;x\otimes y \mt xy\]  
et $J=\ker \mu$.
%l'idéal noyau de la multiplication:
%$$
%J = \ker\big(\mu : \BoB \to \gB \big).
%$$
Alors $J$ est un $\BoB$-module de présentation finie.
\end {theorem}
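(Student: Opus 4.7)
On fixe une présentation finie $\gB \simeq \gA[\uX]/\langle F_1,\dots,F_m\rangle$, de sorte que $\BoB \simeq R/\langle F_k(\uY), F_k(\uZ)\rangle_{1\le k\le m}$ avec $R = \gA[\uY,\uZ]$. Comme rappelé plus haut, l'idéal $J$ est engendré par les $n$ \elts $y_j-z_j$, ce qui fournit une surjection $\pi\colon\BoB^n \twoheadrightarrow J$, $e_j\mapsto y_j-z_j$. Il s'agit donc d'exhiber un nombre fini de générateurs de $\ker\pi$.

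On introduit deux familles de syzygies triviales. D'une part les $\binom{n}{2}$ syzygies de Koszul $\tau_{ij} := (y_i-z_i)\,e_j - (y_j-z_j)\,e_i$ pour $1\le i<j\le n$, qui appartiennent \imdt à $\ker\pi$. D'autre part, pour chaque $k$, la décomposition bezoutienne $F_k(\uY)-F_k(\uZ) = \sum_{j=1}^n(Y_j-Z_j)\,U_{kj}(\uY,\uZ)$ de la section~\ref{secJacBez}, évaluée dans $\BoB$ où $F_k(\uy)=F_k(\uz)=0$, fournit la syzygie $\sigma_k := \sum_j U_{kj}(\uy,\uz)\,e_j$. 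L'objectif est de montrer que les $\binom{n}{2}+m$ \elts $\tau_{ij},\sigma_k$ engendrent $\ker\pi$ comme $\BoB$-module.

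Soit $(a_j)_j\in\ker\pi$, que l'on relève en $(A_j)_j\in R^n$; il existe $B_k,C_k\in R$ tels que $\sum_j A_j(Y_j-Z_j) = \sum_k B_k F_k(\uY) + \sum_k C_k F_k(\uZ)$ dans $R$. En substituant $F_k(\uY) = F_k(\uZ) + \sum_j (Y_j-Z_j)\,U_{kj}$, cette égalité se réécrit
\[
\sum_j \Bigl[A_j - \som_k B_k U_{kj}\Bigr](Y_j-Z_j) \;=\; \som_k (B_k+C_k)(\uY,\uZ)\,F_k(\uZ).
\]
Le membre de gauche appartenant à $\langle Y_j-Z_j\rangle$, celui de droite aussi; la spécialisation $\uY := \uZ$ donne $\sum_k (B_k+C_k)(\uZ,\uZ)\,F_k(\uZ) = 0$ dans $\gA[\uZ]$. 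Une seconde application de la décomposition bezoutienne, cette fois aux \pols $B_k+C_k$ vus comme fonctions de $\uY$ à paramètres $\uZ$, permet alors d'écrire le membre de droite sous la forme $\sum_j (Y_j-Z_j)\,E_j$, où chaque $E_j$ est une $R$-combinaison \lin des $F_k(\uZ)$, donc appartient à $I(\uZ)\cdot R$ et s'annule modulo $I(\uY)+I(\uZ)$.

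On aboutit à l'identité $\sum_j \bigl[A_j - \sum_k B_k U_{kj} - E_j\bigr](Y_j-Z_j) = 0$ dans $R$. Or la suite $(Y_j-Z_j)_{1\le j\le n}$ est régulière dans $R$: par le changement de variables $T_j := Y_j-Z_j$, on voit $R = \gA[\uZ][T_1,\dots,T_n]$ comme anneau de \pols en les $T_j$, et la suite des \idtrs y est manifestement régulière. Le complexe de Koszul étant exact, les syzygies de $(Y_j-Z_j)_j$ dans $R$ sont engendrées par les syzygies triviales $(Y_i-Z_i)\,e_j - (Y_j-Z_j)\,e_i$. En réduisant cette décomposition modulo $I(\uY)+I(\uZ)$, les $E_j$ s'éliminent, et $(a_j)_j$ apparaît comme combinaison $\BoB$-linéaire des $\tau_{ij}$ et des $\sigma_k$, ce qui conclut. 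Le point délicat est précisément ce second usage de la bezoutienne: il est indispensable pour absorber les termes résiduels en $F_k(\uZ)$ à l'intérieur de $\langle Y_j-Z_j\rangle$ avec des coefficients qui se trouvent être nuls dans $\BoB$.
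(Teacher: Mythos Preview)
Your proof is correct and follows essentially the same strategy as the paper's: both exhibit the trivial (Koszul) syzygies and the bezoutian syzygies as generators of $\ker\pi$, lift a given syzygy to $R=\gA[\uY,\uZ]$, and invoke the fact that the only syzygies of $(Y_j-Z_j)_j$ in $R$ are the trivial ones. The paper streamlines your step involving a second bezoutian: writing the lift as $\sum_j(Y_j-Z_j)V_j = f(\uY)-g(\uZ)$ with $f,g\in I(\uX)$ and specializing $\uY,\uZ:=\uX$ gives directly $f=g$, whence $\sum_j(Y_j-Z_j)(V_j-U_j^f)=0$ in one stroke, bypassing your residual terms~$E_j$.
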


%%%%
\begin {proof} %\leavevmode

Adoptons les notations de la section \ref{secJacBez} pour présenter
$\gB/\gA$ sous la forme $\gA[\uX]/I(\uX)$. 
Si $\gB=\aqo\AuX{\lfs}$, alors $I(\uX)=\gen{f_1(\uX),\dots,f_s(\uX)}$.
Ceci fournit une
présentation de $(\BoB)/\gA$ sous la forme $\gA[\uY,\uZ]/\big(I(\uY) +
I(\uZ)\big)$ dans laquelle $J = \langle y_1-z_1, \dots,
y_n-z_n\rangle$.

Il s'agit de montrer que le noyau de
la forme linéaire $\pi$:
$$
\pi :
\xymatrix @C=2.5cm{\gA[\uy,\uz]^n \ar[r]^{[y_1-z_1, \dots, y_n-z_n]} & \gA[\uy,\uz]}
$$
est un sous-module de type fini de $\gA[\uy,\uz]^n$.

\medskip

Pour $f \in \gA[\uX]$, définissons $U^f_j \in \gA[\uY,\uZ]$ par
$$
U^f_j(\uY,\uZ) =
\dfrac{f(Z_1,\dots, Z_{j-1}, \cercle{$Y_j$}, Y_{j+1},\dots,Y_n) -
f(Z_1,\dots,Z_{j-1},\cercle{$Z_j$},Y_{j+1},\dots,Y_n)}
{Y_j - Z_j}
$$
de sorte que:
$$
f(\uY) - f(\uZ) = \som_j (Y_j-Z_j) U^f_j(\uY,\uZ).
$$
Posons
$$
u_j^f = U^f_j(\uy,\uz) \in \gA[\uy,\uz],
\qquad
u^f = (u^f_1, \dots, u^f_n) \in \gA[\uy,\uz]^n.
$$
Pour $f \in I(\uX)$, le vecteur $u^f$ est dans $\ker\pi$.

Notons $(e_1, \dots, e_n)$ la base canonique de $\gA[\uy,\uz]^n$.
Nous allons obtenir un système générateur fini de $\ker\pi$ formé tout d'abord par les relateurs triviaux entre les $y_j-z_j$
$$
(y_i-z_i)e_j - (y_j-z_j)e_i, \qquad
1 \le i < j \le n
$$
et ensuite par les  $u^f$ lorsque $f$ décrit un système générateur (fini) de l'idéal
$I(\uX)$. 

\smallskip Soit une relation dans $\gA[\uy,\uz]$ avec $v_j = V_j(\uy,\uz)$  où
$V_j(\uY,\uZ) \in \gA[\uY,\uZ]$:
$$
\som_j (y_j-z_j)v_j = 0
$$
Dans $\gA[\uY,\uZ]$, ceci signifie
$$
\som_j (Y_j-Z_j)V_j(\uY,\uZ) = f(\uY) - g(\uZ) \quad
\text{avec $f(\uY) \in I(\uY)$ et $g(\uZ) \in I(\uZ)$}.
$$
En spécialisant $(\uY,\uZ) := \uX$, on obtient $f(\uX) = g(\uX)$, qui appartient
à $I(\uX)$. D'où
$$
\som_j (Y_j-Z_j)V_j(\uY,\uZ) = f(\uY) - f(\uZ) =
\som_j (Y_j-Z_j) U^f_j(\uY,\uZ).
$$
En notant $W_j(\uY,\uZ) = V_j(\uY,\uZ) - U^f_j(\uY,\uZ)$:
$$
\som_j (Y_j-Z_j)W_j(\uY,\uZ) = 0.
$$
Comme la suite $(Y_1-Z_1, \dots, Y_n-Z_n)$ est 1-sécante\footnote{Une suite est dite 1-sécante, si le module des \syzys pour (l'\id engendré par) cette suite est engendré par les \syzys triviales.} dans
$\gA[\uY,\uZ]$, on obtient que le vecteur $(W_1, \dots, W_n)$
de $\gA[\uY,\uZ]^n$ est combinaison linéaire des relateurs triviaux
entre les $Y_j-Z_j$. Et par suite que le vecteur $(v_1, \dots, v_n)$
est combinaison linéaire de $u^f$ et des relateurs triviaux
entre les $y_j-z_j$.

En conséquence, la réunion de l'ensemble des relateurs triviaux et de l'ensemble de tous les $u^f$ lorsque~$f$ décrit $I(\uX)$ est un système générateur de $\ker\pi$.
On conclut en remarquant que $f \mapsto u^f$ est $\gA$-linéaire en~$f$, donc on peut limiter les $f$ à un système générateur fini de $I(\uX)$.
\end {proof}

\subsection{Deux idéaux \gui{différentes}}

\begin {definition} [les (idéaux) différentes de Kälher et de Noether]

Soit $\gB/\gA$ une algèbre de présentation finie. On note $\mu$ la multiplication (en tant qu'\Ali)
\[
\mu : \BoB \to \gB,\;x\otimes y\mapsto xy
\] 
et $J=\ker\mu$.
%$J$ l'idéal de $\BoB$ noyau de la multiplication:
%$$
%J = \ker\big(\mu : \BoB \to \gB \big).
%$$
On définit deux idéaux de $\gB$, chacun étant nommé \textsl{idéal différente de
$\gB/\gA$}. Le premier de Kähler, le second de Noether:
$$
\DK(\gB/\gA) = \cF_0(\OmegaBA),
\qquad
\DN(\gB/\gA) = \mu(\Ann(J)).
$$
D'après le lemme \ref{KahlerSubsetNoether}, on a l'inclusion $\DK(\gB/\gA) \subseteq  \DN(\gB/\gA)$.
\end {definition}

\Note Voir \cite{Kunz}. La différente de Kähler est définie dans la section~10. L'annexe~G définit les différentes au sens de Noether et de Dedekind\footnote{La différente de Dedekind $\DD(\gB/\gA)$ est définie lorsque $\gB$ est est un \mptf sur $\gA$. C'est l'idéal engendré par  $f'(x)$, où $f(T)$ est le \polcar de (la multiplication par)~$x$ dans le \Amo \ptf $\gB$. Voir à ce sujet dans \cite{CACM} ou \cite{ACMC}, la proposition III-5.10 et le point 5 de la \dfn VI-3.1.}. Dans la proposition~10.17, Kunz démontre que les trois notions d'idéal différente coïncident dans le contexte suivant: l'anneau $\gA$ est \noe, $\gB/\gA$ est finie et localement intersection complète, et $\Frac(\gB)/\Frac(\gA)$ est étale. \eoe

%%%
\begin {lemma} %\leavevmode

Dans le contexte de la définition précédente, l'idéal $J$ est un $\BoB$-module de présentation
finie, cf. le théorème \ref{PresentationFinieJ}.
On peut donc considérer son idéal de Fitting $\cF_0(J)$ qui est un idéal de $\BoB$.
Alors:
$$
\DK(\gB/\gA) = \mu(\cF_0(J)).
$$
Comme on a toujours $\cF_0(J) \subseteq \Ann(J)$, on retrouve l'inclusion
$\DK(\gB/\gA)\subseteq \DN(\gB/\gA)$.
\end {lemma}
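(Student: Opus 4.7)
Le plan est de déduire l'\egt $\DK(\gB/\gA) = \mu(\cF_0(J))$ de deux faits classiques. Le premier est l'\iso canonique de $\gB$-modules $\OmegaBA \simeq J/J^2$, la structure de $\gB$-module sur $J/J^2$ provenant de l'\iso $\BoB/J \simeq \gB$ donné par $\mu$. Le second est le comportement des \idfs par quotient: pour un $R$-module \pf $M$ et un \id $I$ de $R$, la réduction modulo $I$ d'une matrice de \pn de $M$ fournit une matrice de \pn du $R/I$-module $M/IM$; il en résulte que $\cF_k(M/IM)$ est égal à l'image de $\cF_k(M)$ dans $R/I$.

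D'après le \thref{PresentationFinieJ}, $J$ est un $\BoB$-module \pf. En appliquant le second fait avec $R = \BoB$, $M = J$ et $I = J$, on obtient que $\cF_0(J/J^2)$ est l'image de $\cF_0(J)$ dans $\BoB/J \simeq \gB$, c'est-à-dire $\mu(\cF_0(J))$. Combiné avec le premier fait, ceci donne
\[
\DK(\gB/\gA) = \cF_0(\OmegaBA) = \cF_0(J/J^2) = \mu(\cF_0(J)),
\]
qui est l'\egt voulue. L'inclusion $\DK(\gB/\gA) \subseteq \DN(\gB/\gA)$ résulte alors de l'inclusion générale $\cF_0(M) \subseteq \Ann(M)$ (valable pour tout module \pf) appliquée à $M = J$: on obtient $\mu(\cF_0(J)) \subseteq \mu(\Ann(J)) = \DN(\gB/\gA)$.

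Le point le plus technique est l'\iso $\OmegaBA \simeq J/J^2$. Il s'obtient en munissant $J/J^2$ de l'\ali $d : \gB \to J/J^2$, $b \mt \ov{b\otimes 1 - 1\otimes b}$; la règle de Leibniz découle modulo $J^2$ de l'\idt $bb'\otimes 1 - 1\otimes bb' = (b\otimes 1)(b'\otimes 1 - 1\otimes b') + (1\otimes b')(b\otimes 1 - 1\otimes b)$, puis la \prt \uvle se vérifie directement à partir de la \pn explicite de $J$ donnée dans la démonstration du \thref{PresentationFinieJ}. Cet \iso est classique et figure dans \cite{Kunz}; les deux résultats généraux sur les \idfs invoqués ci-dessus sont exposés dans \cite{CACM, ACMC}.
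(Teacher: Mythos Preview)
Your proof is correct and follows essentially the same route as the paper's own argument: both reduce the equality to the general fact that $\cF_0(M/IM)$ is the image of $\cF_0(M)$ in $R/I$, applied with $R=\BoB$, $M=I=J$, together with the classical identification $\OmegaBA \simeq J/J^2$. The extra paragraph you give sketching the derivation $d$ and the universal property is a helpful expansion, but the core reasoning is the same.
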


\begin {proof} %\leavevmode

Soit $\fa$ un idéal d'un anneau commutatif $\gC$ et $\pi : \gC \twoheadrightarrow \gC/\fa$
la surjection canonique. Si $E$ est un $\gC$-module de présentation finie,
alors $E/\fa E$ est un $(\gC/\fa)$-module de présentation finie et:
$$
\cF_0(E/\fa E) = \pi(\cF_0(E))
$$
On applique cette remarque à $\gC = \BoB$, $\fa=J$ de sorte que
$\gC/\fa \simeq \gB$ via $\mu$ i.e. $\pi = \mu$.
On prend comme module $E = J$. On a alors $E/\fa E = J/J^2$, si
bien que:
$$
\pi(\cF_0(J)) = \cF_0(J/J^2).
$$
Pour conclure, on utilise le fait que $J/J^2$ est un modèle pour
$\OmegaBA$.
\end {proof}

%%%
\begin {theorem} %\leavevmode

Soit $\gB/\gA$ une algèbre de présentation finie. Le $\gB$-module
$\OmegaBA$ est alors de présentation finie. Supposons qu'il soit
$m$-engendré (si $\gB/\gA$ est $n$-engendré, on peut prendre $m=n$).

\smallskip\noindent  On a les inclusions suivantes d'idéaux de $\gB$:
$$
\big(\DN(\gB/\gA)\big)^m  \enspace\subseteq\enspace
\big(\Ann(\OmegaBA)\big)^m   \enspace\subseteq\enspace
\DK(\gB/\gA)  \enspace\subseteq\enspace
\DN(\gB/\gA)  \enspace\subseteq\enspace
\Ann(\OmegaBA).
$$
En particulier si $m=1$ (c'est le cas lorsque $\gB/\gA$ est monogène),
tous les idéaux ci-dessus sont égaux.
\end {theorem}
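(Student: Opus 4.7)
The plan is to decompose the chain of five expressions into four independent inclusions, three of which I can dispatch immediately from results already in hand, leaving one classical fact about Fitting ideals as the main work.

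\medskip
Before the Fitting ideal even appears, I need to establish that $\OmegaBA$ is a finitely presented $\gB$-module. The standard identification $\OmegaBA \simeq J/J^2$, together with the structural remark recalled just before (if $E$ is a finitely presented $\gC$-module and $\fa \subseteq \gC$ an ideal, then $E/\fa E$ is a finitely presented $\gC/\fa$-module), combined with Theorem \ref{PresentationFinieJ} applied to $J$ as a $\BoB$-module, gives this immediately and furthermore yields $\cF_0(\OmegaBA) = \mu(\cF_0(J))$, which reidentifies $\DK(\gB/\gA)$.

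\medskip
I would then handle three inclusions in one breath. For the rightmost one, $\DN(\gB/\gA) \subseteq \Ann(\OmegaBA)$: any element of $\DN(\gB/\gA)$ has the form $b=\mu(\delta)$ with $\delta \in \Ann(J)$; since the $\gB$-action on $J/J^2 \simeq \OmegaBA$ is induced by \emph{any} lift from $\gB$ to $\BoB$, we may use $\delta$ itself to compute $b\cdot(j+J^2) = \delta j + J^2 = 0$ for every $j\in J$. The leftmost inclusion $\DN(\gB/\gA)^m \subseteq \Ann(\OmegaBA)^m$ is then obtained by raising this to the $m$-th power. The middle inclusion $\DK(\gB/\gA) \subseteq \DN(\gB/\gA)$ is precisely Lemma \ref{KahlerSubsetNoether}.

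\medskip
The remaining inclusion $\Ann(\OmegaBA)^m \subseteq \DK(\gB/\gA)$ is the substantive step. It is a special case of the general fact: for an $m$-generated finitely presented module $M$ over a commutative ring, $\Ann(M)^m \subseteq \cF_0(M)$. My argument would be: choose a presentation $\gB^p \xrightarrow{\Phi} \gB^m \to M \to 0$ and $a_1,\dots,a_m \in \Ann(M)$; since each $a_j e_j \in \Im(\Phi)$, there is a $p\times m$ matrix $C$ with $\Phi\,C = \mathrm{diag}(a_1,\dots,a_m)$; the Cauchy-Binet formula then writes $a_1\cdots a_m = \det(\Phi C)$ as a $\gB$-combination of $m\times m$ minors of $\Phi$, and these minors generate $\cF_0(M)$ by definition. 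This is a polynomial identity and hence works constructively without extra effort.

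\medskip
Finally, two short observations close the proof: if $\gB = \gA[x_1,\dots,x_n]$ is $n$-generated as an $\gA$-algebra, then $\OmegaBA$ is $n$-generated as a $\gB$-module by $dx_1,\dots,dx_n$ (universal property of Kähler differentials), so one may take $m=n$; and when $m=1$ the chain $\DN \subseteq \Ann(\OmegaBA) \subseteq \DK \subseteq \DN \subseteq \Ann(\OmegaBA)$ forces all four ideals to coincide. The only real obstacle is the Cauchy–Binet step, and that is only an obstacle of bookkeeping, not of ideas, since every other inclusion in the chain is either already proved in the paper or a one-line transport across the isomorphism $J/J^2 \simeq \OmegaBA$.
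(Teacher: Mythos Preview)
Your proposal is correct and follows essentially the same route as the paper: the paper proves $\DN \subseteq \Ann(\OmegaBA)$ via $\mu(\delta)\cdot\beta = (\mu(\delta)\otimes 1 - \delta)\beta \in J^2$ (which is just your ``use $\delta$ itself as a lift'' unwound), and for $\Ann(\OmegaBA)^m \subseteq \DK$ it merely invokes the general inclusion $(\Ann M)^m \subseteq \cF_0(M)$ without the Cauchy--Binet details you supply. The remaining inclusions are handled exactly as you do, by citing Lemma~\ref{KahlerSubsetNoether} and raising to the $m$-th power.
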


\begin {proof} ~%\leavevmode

\noindent 
$\blacktriangleright$
Montrons l'inclusion $\DN(\gB/\gA) \subseteq \Ann(\OmegaBA)$ i.e. 
$\mu\big(\Ann(J)\big) \subseteq
\Ann(J/J^2)$. Pour $\delta \in \Ann(J) \subseteq \BoB$,
il suffit de voir que $\mu(\delta).J \subseteq J^2$.  Soit
$\beta \in J$. Alors
$$
\mu(\delta).\beta \eqdef   (\mu(\delta)\otimes 1)\beta
= (\mu(\delta)\otimes 1 - \delta)\beta
$$
Or $\mu(\delta)\otimes 1 - \delta \in J$ donc $\mu(\delta).\beta \in J^2$.

\medskip\noindent 
$\blacktriangleright$
L'inclusion $\big(\Ann(\OmegaBA)\big)^m \subseteq
\DK(\gB/\gA)$ i.e. $\big(\Ann(\OmegaBA)\big)^m \subseteq \cF_0(\OmegaBA)$ résulte,
pour tout module de présentation finie $M$ engendré par $m$ générateurs,
de l'inclusion $\big(\Ann(M)\big)^m \subseteq \cF_0(M)$
\end {proof}

\subsection{Un exemple: l'algèbre $\gB = \gA[x,y,z]$ définie par les relations
$x^2=y^2=z^2$, $xy=xz=yz=0$}
%%%%%%%%

%\subsection{L'exemple de }

Cet exemple est dû aux auteurs  \cite{ScSt74} cf. page 102.
Il est reproduit dans l'exemple 5.4, de l'article \cite{IyTa}. 

Il s'agit d'un exemple avec une inclusion stricte $\DK(\gB/\gA) \subsetneq \DN(\gB/\gA)$ lorsque
$\gA$ est un corps de caractéristique $\ne 5$. Peu d'explications sont fournies chez les auteurs
mentionnés.

Les polynômes qui définissent $\gB/\gA$ sont les 5 polynômes suivants:
$$
\uF :\quad X^2 - Y^2,\quad X^2 - Z^2,\quad  XY,\quad XZ,\quad YZ
$$
On a $x^3 = x^2x = y^2x = 0$ et plus généralement $\langle x,y,z\rangle^3 = 0$ i.e. 
$\langle X,Y,Z\rangle^3 \subset \langle\uF\rangle$.
L'algèbre $\gB/\gA$ est libre de base $(1,x,y,z,x^2)$.
La jacobienne de $\uF$ est donnée par:
$$
\Jac(\uF) = 
\bordercmatrix[\lbrack\rbrack] {
     &\partial_X &\partial_Y &\partial_Z \cr
     &2X        &-2Y      &0         \cr      
     &2X        &0        &-2Z       \cr 
     &Y         &X        &0         \cr 
     &Z         &0        &X         \cr 
     &0         &Z        &Y         \cr           
}
$$
Ses mineurs d'ordre 3 sont des polynômes homogènes de degré 3 donc nuls modulo
$\uF$. En conséquence:
$$
\DK(\gB/\gA) = 0
$$
On va montrer que $5x^2 \in \DN(\gB/\gA)$.

\medskip Pour travailler avec $\Ann(J) \subseteq \BoB$, on commence par renommer $x,y,z$ en $x_1,x_2,x_3$.
Alors
$$
\BoB = \gA[y_1,y_2,y_3, z_1,z_2,z_3],\qquad
J = \langle y_1-z_1, y_2-z_2, y_3-z_3\rangle
$$
avec
$$
y_1^2=y_2^2=y_3^2,\enspace y_1y_2 = y_1y_3= y_2y_3 = 0, \qquad
z_1^2=z_2^2=z_3^2,\enspace z_1z_2 = z_1z_3= z_2z_3 = 0
$$
Il nous faut trouver des $t\in \BoB$ tels que $t.(y_i-z_i) = 0$ pour $i=1,2,3$.
Puisque l'on veut $\mu(t) \ne 0$, on cherche $t$ homogène de degré 2 en les $y_i,z_i$.
Voici un candidat
$$
t = y_1^2+z_1^2+s = y_2^2+z_2^2+s = y_3^2+z_3^2+s
\qquad\text{avec}\qquad
s = y_1z_1 + y_2z_2 + y_3z_3
$$
Vérifions que $t.(y_i-z_i) = 0$. Pour $i=1$ par exemple, on écrit $t =
(y_1^2 + y_1z_1 + z_1^2) + (y_2z_2 + y_3z_3)$:
$$
(y_1^2 + y_1z_1 + z_1^2)(y_1 - z_1) = y_1^3 - z_1^3 = 0-0 = 0
;\;
(y_2z_2 + y_3z_3)(y_1 - z_1) = 0
\; \text{car} \;
\begin {array}{c} y_2y_1 = y_3y_1=0\\[2mm] z_2z_1 = z_3z_1=0\end{array}
$$
Ainsi $t \in \Ann(J)$ et $\mu(t) = 5x_1^2$. En revenant aux notations $x,y,z$, on a bien montré $5x^2 \in \DN(\gB/\gA)$.

\medskip 
De la même manière, soit $\gB = \gA[x_1, \dots, x_n]$ l'algèbre
définie par les relations $x_1^2 = \dots = x_n^2$ et $x_ix_j = 0$ pour
$i < j$. Alors $\gB/\gA$ est libre de rang $n+2$, de base $(1,
x_1, \dots, x_n, x_1^2)$. Et on a $(n+2)x_1^2 \in \DN(\gB/\gA)$ via
un argument analogue au cas $n=3$.  Soit
$t \in \BoB = \gA[y_1, \dots, y_n, z_1, \dots, z_n]$
défini par:
$$
t = y_i^2 + z_i^2 + \som_{j=1}^n y_jz_j
$$
On a $t \in \Ann(J)$ et $\mu(t) = (n+2)x_1^2$.
Pour $n \ge 3$, on a $\DK(\gB/\gA) = 0$ puisque les $n$-mineurs
de la jacobienne sont des polynômes homogènes de degré $n$
et que la composante homogène de degré $d$ de $\gB$ est
nulle pour~$d \ge 3$.
 
\bibliographystyle{plainnat-fr}
%\bibliography{KhalerNoetDiff.bib}

\end{document}